\documentclass{article}
\usepackage{amsmath, amsthm, amssymb, mathtools, leftindex, tikz-cd, verbatim, bbm}
\usepackage{enumitem}
\usepackage[parfill]{parskip}
\usepackage[a4paper, total={7in, 9in}]{geometry}
\usepackage[hidelinks]{hyperref}

\usepackage{pgfplots}
 
\pgfplotsset{compat = newest}

\usepackage[maxbibnames=99, backend= biber, style = alphabetic]{biblatex}
\mathtoolsset{showonlyrefs}

\newtheorem{thm}{Theorem}[section]
\newtheorem{lem}[thm]{Lemma}
\newtheorem{prop}[thm]{Proposition}

\newtheorem{defi}[thm]{Definition}
\newtheorem{cor}[thm]{Corollary}

\newtheoremstyle{named}{}{}{\itshape}{}{\bfseries}{.}{.5em}{\thmnote{#3}}

\theoremstyle{named}

\newcommand{\D}{\mathcal{D}}

\newcommand{\B}{\mathcal{B}}

\newcommand{\Z}{\mathbb{Z}}

\newcommand{\Q}{\mathbb{Q}}
\renewcommand{\S}{\mathcal{S}}

\newcommand{\q}{\mathfrak{q}}

\newcommand{\CC}{\mathbb{C}}
\newcommand{\Hyp}{\mathbb{H}}
\newcommand{\R}{\mathbb{R}}

\newcommand{\N}{\mathcal{N}}
\renewcommand{\O}{\mathcal{O}}

\DeclareMathOperator{\disc}{Disc}

\DeclareMathOperator{\PSL}{PSL}
\DeclareMathOperator{\SL}{SL}

\DeclareMathOperator{\vol}{Vol}

\DeclareMathOperator{\Tr}{Tr}
\DeclareMathOperator{\modulo}{mod}

\begin{document}
\title{On the prime geodesic theorem}
\author{By \textit{Alberto Acosta Reche} at University College London}
\date{}
\maketitle

\begin{abstract}
We improve the error term in the prime geodesic theorem for the modular group. More generally, our result applies to any congruence subgroup of any indefinite quaternion algebra over $\Q$. 
\end{abstract}

\section{Introduction.}
The prime geodesic theorem for the modular group $\Gamma = \PSL_2(\Z)$ is concerned with the asymptotics of
\begin{equation}\label{eq:introdefichebyshev}
    \Psi_\Gamma(x) := \sum_{\{P\}_\Gamma: N(P) \leq x} \log N(P_0)
\end{equation}
as $x \to \infty$, where the sum ranges over hyperbolic $\Gamma$-conjugacy classes, $P_0$ is the primitive element corresponding to $P$, and $N(P)$ is the norm of $P$. An alternative expression for $\Psi_\Gamma(x)$ which better captures its number-theoretic content is
\begin{equation}\label{eq:connectiontoLvalues}
    \Psi_{\Gamma}(x) = 2\sum_{2 < n \leq X} \sum_{du^2 = n^2 - 4}  \sqrt{d}L(1; \chi_d),
\end{equation}
where $X = \sqrt{x} + 1/\sqrt{x}$, and $\chi_d$ is the quadratic character (not necessarily primitive) associated to the discriminant $d$, and where $L(s, \chi_d)$ is the corresponding Dirichlet series, see \cite[Equation (1)]{Soundararajan_2013}. Using the Selberg trace formula one can give a quick proof of the asymptotic
\begin{equation}\label{eq:baselinebound}
    \Psi_\Gamma(x) = x + O(x^{3/4}),
\end{equation}
see \cite[Theorem 10.5]{iwaniec}. The same asymptotic holds for an arbitrary cofinite Fuchsian group, except for the possible presence of an explicit secondary term. Several authors were able to improve the error term in \eqref{eq:baselinebound} by using the Bruggeman--Kuznetsov trace formula together with deep number-theoretic inputs, see the works of Iwaniec \cite{Iwaniec1984}, Luo and Sarnak \cite{luosarnak} and Cai \cite{Cai2002}. Using identity \eqref{eq:connectiontoLvalues} Bykovskii was able to prove the local asymptotic
\begin{equation}\label{eq:bykovskiilocalresult}
    \Psi_\Gamma(x + u) - \Psi_\Gamma(x) = u + o_\varepsilon(u),
\end{equation}
where $x^{1/2 + \varepsilon} \leq u \leq x$, see \cite[Theorem 1]{bykovskiiprimegeodesicshortintervals}. Combining ideas from \cite{bykovskiiprimegeodesicshortintervals} with results from \cite{luosarnak},  Soundararajan and Young \cite{Soundararajan_2013} proved that
\begin{equation}\label{eq:soundyoungconditional}
    \Psi_\Gamma(x) = x + O_\varepsilon(x^{2/3 + \theta/6 + \varepsilon}),
\end{equation}
where $\theta > 0$ is any subconvexity exponent in level aspect for quadratic Dirichlet $L$-functions. By work of Conrey and Iwaniec \cite{conreyiwaniec}, Soundararajan and Young were allowed to use $\theta = 1/6 + \varepsilon$ unconditionally in \eqref{eq:soundyoungconditional}, thus arriving at
\begin{equation}\label{eq:soundyoungunconditional}
    \Psi_\Gamma(x) = x + O_\varepsilon(x^{25/36 + \varepsilon}),
\end{equation}
which remains to this day the best unconditional asymptotic for $\Psi_\Gamma(x)$. In this paper we will improve \eqref{eq:soundyoungunconditional} unconditionally, see Corollary \ref{cor:pgtforcongruencesubgroups} below. Before stating and proving a more general result, we would first like to convince the reader that the error term in \eqref{eq:soundyoungunconditional} can indeed be improved.

\subsection{Main idea.}\label{subsec:mainidea}
Let $\omega \in C_c^\infty(0, 1)$ satisfy $\int_\R \omega(u)\, du = 1$, let $Y$ be a parameter satisfying $x^{1/2 + \varepsilon} \leq Y \leq x/\log x$ and define $\omega_Y(u) := \omega(u/Y)/Y$. Using previous work of Luo and Sarnak \cite[Equation (58)]{luosarnak}, Soundararajan and Young evaluated a smoothed version of $\Psi_\Gamma(x)$, see \cite[Equation (13) in Theorem 3.1]{Soundararajan_2013}. After a small rearrangement, their result can be written as
\begin{equation}\label{eq:soundyoungsmoothing}
    \Psi_\Gamma(x) = x - \int_0^Y (\Psi_\Gamma(x + u) - \Psi_\Gamma(x) - u) \omega_Y(u)\, du + O_\varepsilon(x^{7/8 + \varepsilon}Y^{-1/4}).
\end{equation}
On the other hand, Theorem 3.2 in \cite{Soundararajan_2013} asserts that 
\begin{equation}\label{eq:soundyounglocalresult}
    \Psi_\Gamma(x + u) - \Psi_\Gamma(x) =  u + O_\varepsilon(u^{1/2}x^{1/4 + \theta/2 + \varepsilon})
\end{equation}
for $x^{1/2 + \varepsilon} \leq u \leq x/\log x$. Combining \eqref{eq:soundyoungsmoothing} and \eqref{eq:soundyounglocalresult} and choosing the optimal value $Y = x^{5/6 - 2\theta/3}$ one arrives at \eqref{eq:soundyoungconditional}. Thus, an improvement to \eqref{eq:soundyoungunconditional} would follow if we could improve \eqref{eq:soundyounglocalresult} in the range $u \approx x^{13/18}$, with $\theta = 1/6$. 

A key step in the proof of both \eqref{eq:bykovskiilocalresult} and \eqref{eq:soundyounglocalresult} is the identity 
\begin{equation}\label{eq:localexpressionsoundyoung}
    \Psi_\Gamma(x + u) - \Psi_\Gamma(x) = u - \sum_{X < n \leq X'}2\sqrt{n^2 - 4}I_V(n^2 - 4) + O_\varepsilon(uV^{-1/2} + x^{1/2}V^{1/2 + \varepsilon}),
\end{equation}
which can be found implicitly in both \cite[Section 2]{bykovskiiprimegeodesicshortintervals} and \cite[Section 5]{Soundararajan_2013}. In formula \eqref{eq:localexpressionsoundyoung}, the parameter $V$ satisfies $1 \leq V \leq x$ and the quantities $X$ and $X'$ are defined as
\begin{equation*}
X := \sqrt{x} + 1/\sqrt{x},\quad X' := \sqrt{x + u} + 1/\sqrt{x + u},
\end{equation*}
so that, in particular, we have $|X' - X| \asymp ux^{-1/2} \approx x^{2/9}$. Furthermore, $I_V(n^2 - 4)$ is a contour integral given by
\begin{equation*}
    I_V(n^2 - 4) := \frac{1}{2\pi i}\int_{(\sigma)}L(s, n^2 - 4)V^{s - 1}\Gamma(s - 1)\, ds
\end{equation*}
with $0< \sigma < 1$, where we have
\begin{equation}\label{eq:bykovskiifunction}
    L(s, n^2 - 4) := \sum_{du^2 = n^2 - 4}L(s, \chi_d)u^{1-2s}
\end{equation}
with $\chi_d$ being the (not necessarily primitive) real character associated to the discriminant $d$. In their proof of the local asymptotic \eqref{eq:soundyounglocalresult}, Soundarajan and Young estimate each $I_V(n^2-4)$ by letting $\sigma = 1/2$ and using subconvexity in level aspect to obtain
\begin{equation}\label{eq:soundyoungboundsubconvexity}
    I_V(n^2 - 4) \ll x^{\theta + \varepsilon}V^{-1/2}.
\end{equation}
Summing over $n$ they arrive at
\begin{equation}\label{eq:soundyoungsumintegrals}
    \sum_{X < n \leq X'}2\sqrt{n^2 - 4}I_V(n^2 - 4) \ll ux^{\theta + \varepsilon}V^{-1/2},
\end{equation}
which combined with \eqref{eq:localexpressionsoundyoung} leads to \eqref{eq:soundyounglocalresult} if we take $V = ux^{-1/2 + \theta}$. Thus, in order to improve on \eqref{eq:soundyoungunconditional} it is enough to improve on \eqref{eq:soundyoungsumintegrals} in the range $u \approx x^{13/18}$, $V \approx x^{7/18}$, with $\theta = 1/6$.

Motivated by the proof of \eqref{eq:bykovskiilocalresult} in \cite{bykovskiiprimegeodesicshortintervals}, we introduce the set 
\begin{equation*}
    \B_\sigma := \{n \in (X, X'] : L(s, n^2 - 4) \text{ has a zero in }[\sigma, 1] \times [-x^\varepsilon, x^\varepsilon]\},
\end{equation*} 
where $\sigma$ satisfies $1/2 \leq \sigma \leq 1$ and is to be chosen at the end of the argument. If $n \in (X, X'] \setminus \B_\sigma$, then a standard argument \cite[Lemma 1]{bykovskiiprimegeodesicshortintervals} shows that $L(s, n^2 - 4)$ satisfies the Lindelöf hypothesis on $[\sigma + \varepsilon, 1]\times [-x^\varepsilon + 1, x^\varepsilon - 1]$. It follows that for $n \in (X, X'] \setminus \B_\sigma$ we have 
\begin{equation}\label{eq:boundgoodfunctions}
    I_V(n^2 - 4) \ll x^\varepsilon V^{\sigma - 1}.
\end{equation}
If we use bound \eqref{eq:soundyoungboundsubconvexity} for $n \in \B_\sigma$ and bound \eqref{eq:boundgoodfunctions} for $n \in (X, X'] \setminus \B_\sigma$ we arrive at 
\begin{equation}\label{eq:boundsumintegrals}
    \sum_{X < n \leq X'}2 \sqrt{n^2 - 4}I_V(n^2 - 4) \ll |\B_\sigma|x^{1/2 + \theta + \varepsilon}V^{-1/2} + ux^\varepsilon V^{\sigma - 1}.
\end{equation}
Heath-Brown's zero-density estimate for quadratic characters \cite[Theorem 3]{heathbrownquadlargesieve} implies that
\begin{equation}\label{eq:introheathbrownestimate}
    |\B_\sigma| \ll_\varepsilon x^{3(1-\sigma)/(2-\sigma)+\varepsilon}.
\end{equation}
Recall that $\theta = 1/6$, $u \approx x^{13/18}$ and $V \approx x^{7/18}$. If we insert \eqref{eq:introheathbrownestimate} in \eqref{eq:boundsumintegrals} and let $\sigma \in (23/25, 13/14)$ we can improve the bound in \eqref{eq:soundyoungsumintegrals} which, by our discussion, leads to a better bound for the error term in \eqref{eq:soundyoungunconditional}.

Our argument can be refined a bit more by looking into the proof of \eqref{eq:introheathbrownestimate}. Indeed, by inspecting \cite[Section 11]{heathbrownquadlargesieve} we observe that our application of \cite[Theorem 3]{heathbrownquadlargesieve} is using implicitly the estimate
\begin{equation}\label{eq:introfourthmomentbound}
    \sum_{n \in \B_\sigma} |L(1/2 + it, n^2 - 4)|^4 \ll \sum_{\chi \in S(x)} x^\varepsilon |L(1/2 + it, \chi)|^4 \ll x^{1 + \varepsilon},
\end{equation} 
where $S(x)$ is the set of primitive quadratic characters of discriminant at most $x$ and $|t| \ll x^\varepsilon$. Since $|\B_\sigma| \leq |X' - X| \asymp ux^{-1/2}\approx x^{2/9}$ is very small compared to $|S(x)| \asymp x$, one can hope to improve on \eqref{eq:introfourthmomentbound} by using subconvexity. This is indeed the case, since we have
\begin{equation}\label{eq:introsubconvexbound}
    \sum_{n \in \B_\sigma} |L(1/2 + it, n^2 - 4)|^4 \ll |\B_\sigma| x^{4\theta + \varepsilon},
\end{equation} 
which is better than \eqref{eq:introfourthmomentbound} whenever $|\B_\sigma| \leq x^{1/3}$ (recall that $\theta = 1/6$). Motivated by this observation we will prove a zero-density estimate for quadratic characters which improves on \cite[Theorem 3]{heathbrownquadlargesieve} near $\sigma = 1$, see Proposition \ref{prop:myzerodensity} below.

\subsection{Main results.}\label{subsec:results}
We start with a definition which captures the subconvexity bounds that will feature in the results below.
\begin{defi}
If $\psi$ is a Dirichlet character, we say that a pair $(\theta, A)\in [0, 1/4]\times [0, \infty)$ is $\psi$-\emph{admissible} if for all $t \in \R$ and all primitive quadratic characters $\chi$ we have 
\begin{equation}\label{eq:defipsiadmissible}
    |L(1/2 + it, \chi \psi)| \ll_{\psi, \varepsilon} (1 + |t|)^{A + \varepsilon} \q(\chi)^{\theta + \varepsilon},
\end{equation}
where $\q(\chi)$ is the conductor of $\chi$. We say that $\theta \in [0, 1/4]$ is admissible if for any $\psi$ there is some $A \geq 0$, possibly depending on $\psi$, such that the pair $(\theta, A)$ is $\psi$-admissible. 
\end{defi}
By work of Petrow and Young we know that the pair $(\theta, A) = (1/6, 1/6)$ is $\psi$-admissible for any $\psi$, see \cite{petrowyoung20} and \cite{petrowyoung}. In particular, the exponent $\theta = 1/6$ is admissible.

In order to state a general result, we need to recall the setup of quaternion algebras from \cite{reche2026chebotarevgeodesictheoremnonsplit}. We let $B$ be an indefinite quaternion algebra over $\Q$ and we let $\O$ be a maximal order in $B$. Using the isomorphism $B \otimes_\Q \R \simeq M_2(\R)$ we can view $\O^1 \subset \SL_2(\R)$ as a discrete cofinite subgroup. We let $q \geq 1$ be a positive integer and denote by $\O^1(q) := \O^1 \cap (1 + q\O)$ the principal congruence subgroup of $\O^1$ of level $q$. We let $\Gamma$ be a congruence subgroup $\O^1(q) \subset \Gamma \subset \O^1$, and define $\widetilde{\Gamma} := \O^1(q)\backslash \Gamma$. Finally, we let $\varphi$ be a class function on $\widetilde{\Gamma}$ and define

\begin{equation}\label{eq:generalchebyshevsum}
\Psi_{\Gamma}(x; \varphi) := [\{\pm I\}\Gamma: \Gamma]\sum_{\substack{\{P\}_{\Gamma}\\N(P) \leq x\\\Tr(P) > 2}} \log N(P_0)\varphi(P \modulo q).
\end{equation} 

The reader only interested in the case of $\Gamma = \SL_2(\Z)$ may take $B = M_2(\Q)$, $\O = M_2(\Z)$, $q = 1$ and $\varphi = \mathbbm{1}$, in which case \eqref{eq:generalchebyshevsum} agrees with \eqref{eq:introdefichebyshev}. The main result of this paper is a strong asymptotic for the sum $\Psi_\Gamma(x; \varphi)$ as $x \to \infty$.

\begin{thm}\label{thm:maintheorem}
For any $q \geq 1$, for any class function $\varphi$ on $\widetilde{\Gamma}$, and for any admissible exponent $\theta$ we have 
\begin{equation*}
    \Psi_{\Gamma}(x; \varphi) =  \langle \varphi, \mathbbm{1}\rangle x + O_{\varphi, \varepsilon}(x^{\eta(\theta) + \varepsilon}),
\end{equation*} 
where 
\begin{equation*}
    \langle \varphi, \mathbbm{1}\rangle := \frac{1}{|\widetilde{\Gamma}|}\sum_{\gamma \in \widetilde{\Gamma}}\varphi(\gamma)
\end{equation*}
and
\begin{equation}\label{eq:definitionexponent}
    \eta(y) := 1/2 + \frac{\sqrt{81y^2 + 106y + 33}-5 - 9y}{4}.
\end{equation}
\end{thm}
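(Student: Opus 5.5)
The plan is to follow the strategy of Subsection \ref{subsec:mainidea}, in the generality of \cite{reche2026chebotarevgeodesictheoremnonsplit}.

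\textbf{Step 1: smoothing.} First I establish the analogue of \eqref{eq:soundyoungsmoothing} for $\Psi_\Gamma(x;\varphi)$, namely
\begin{equation*}
\Psi_\Gamma(x;\varphi)=\langle\varphi,\mathbbm{1}\rangle x-\int_0^Y\bigl(\Psi_\Gamma(x+u;\varphi)-\Psi_\Gamma(x;\varphi)-\langle\varphi,\mathbbm{1}\rangle u\bigr)\omega_Y(u)\,du+O(x^{7/8+\varepsilon}Y^{-1/4}).
\end{equation*}
I would take this from the Selberg trace formula on $\O^1(q)$, using the Luo--Sarnak mean square estimate for the spectral exponential sums. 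Since $\O^1(q)$ is congruence, Selberg's eigenvalue bounds and the Kuznetsov/Jacquet--Langlands inputs carry over.

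\textbf{Step 2: short-interval identity.} Next I prove the analogue of \eqref{eq:localexpressionsoundyoung}. Hyperbolic classes of trace $n$ with a prescribed class modulo $q$ are counted by class numbers of orders in $\Q(\sqrt{n^2-4})$ with local conditions at $q$ and at the primes ramified in $B$, as in \cite{reche2026chebotarevgeodesictheoremnonsplit}. This expresses the short-interval sum through Bykovskii-type series $L(s,n^2-4)$ twisted by finitely many Dirichlet characters $\psi$ of modulus depending on $q$ and $\disc B$, plus finitely many correction Euler factors. Smoothing with $V^{s-1}\Gamma(s-1)$ gives the main term $\langle\varphi,\mathbbm{1}\rangle u$, a sum $\sum_{X<n\le X'}\sqrt{n^2-4}\,I_V^\psi(n^2-4)$, and the error $uV^{-1/2}+x^{1/2}V^{1/2+\varepsilon}$.

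\textbf{Step 3: new zero-density estimate.} This is the main obstacle. For a set $\B$ of $n\in(X,X']$, I want to bound the number $N$ of $n\in\B$ whose $L(s,n^2-4)\psi$ has a zero in $[\sigma,1]\times[-x^\varepsilon,x^\varepsilon]$, and I would follow Heath-Brown's method \cite[Section 11]{heathbrownquadlargesieve}. First I use a mollifier to produce zero detectors: long Dirichlet polynomials of length $M$ together with the $L$-function on the line $1/2$. Then I count the large values of the short polynomials using Heath-Brown's quadratic large sieve. On the critical line, instead of the fourth moment over all characters \eqref{eq:introfourthmomentbound}, I use the pointwise subconvex bound \eqref{eq:introsubconvexbound} coming from admissibility of $(\theta,A)$; the $t$-aspect factor is harmless since $|t|\le x^\varepsilon$. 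Balancing should give something like
\begin{equation*}
N\ll x^{\varepsilon}\bigl(x^{a(\sigma)}+|\B|^{b(\sigma)}x^{c(\sigma,\theta)}\bigr),
\end{equation*}
which improves on \eqref{eq:introheathbrownestimate} near $\sigma=1$ when $|\B|$ is small.

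\textbf{Step 4: optimization.} For $n$ outside the exceptional set, Lindelöf on $[\sigma+\varepsilon,1]$ (\cite[Lemma 1]{bykovskiiprimegeodesicshortintervals}) gives \eqref{eq:boundgoodfunctions}. For exceptional $n$ I use \eqref{eq:soundyoungboundsubconvexity}. Together these give \eqref{eq:boundsumintegrals} with the new bound for $|\B_\sigma|$. Inserting this into Step 2 and then Step 1, I optimize over $V$, $\sigma$ and $Y$, with $u\approx Y$, $|\B_\sigma|\le ux^{-1/2}$ and $x^{1/2+\varepsilon}\le Y\le x/\log x$. The optimal $\sigma$ solves a quadratic equation, which is presumably the source of the square root in $\eta(\theta)$. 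As a check, substituting $\theta=1/6$ should give an exponent below $25/36$.
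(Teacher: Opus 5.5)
Your outline follows the paper's architecture: Soundararajan--Young smoothing with a Luo--Sarnak bound, Bykovskii's short-interval identity, subconvexity for exceptional $n$ and Lindel\"of from a zero-free rectangle for the rest, then optimisation. \textbf{The gap is Step 3, which is left open.} It ends with ``something like'' a bound with unspecified exponents $a(\sigma)$, $b(\sigma)$, $c(\sigma,\theta)$, so nothing in the proposal produces the specific exponent $\eta(\theta)$, and that exponent is the whole content of the theorem.

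What is needed is $|\B_\sigma|\ll x^{2(1+2\theta)(1-\sigma)+\varepsilon}$ for an exceptional set in the whole range $(x^{1/2},2x^{1/2}]$, independent of $u$. This follows from a zero-density estimate over \emph{all} primitive real characters of conductor $\le Q\asymp x$, for each fixed $\psi\bmod q$:
\[
\sum_{\chi\in S(Q)}N(\sigma,T,\chi\psi)\ll (QT)^{\varepsilon}Q^{2(1+2\theta)(1-\sigma)}T^{1+4A(1-\sigma)}.
\]
To prove it, run Heath-Brown's Section~11 argument with a mollifier $M_X$ of length $X$ and a zero-detector cut-off $Y$ (these $X,Y$ are internal to this argument). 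There are two cases.
\begin{itemize}
\item If the half-line integral of $L\cdot M_X$ detects the zero, do not use the fourth moment. Square, apply Cauchy--Schwarz and the pointwise bound $|L(1/2+it,\chi\psi)|\ll Q^{\theta+\varepsilon}$, then apply the quadratic large sieve to the dyadic pieces of $M_X$. This gives $Q^{2\theta}(Q+X)Y^{1-2\sigma}$.
\item The long-sum case is exactly Heath-Brown's and gives $QX^{1-2\sigma}+Y^{2-2\sigma}$.
\end{itemize}
Taking $X=Q$ and $Y=Q^{1+2\theta}$ gives the estimate.

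The $|\B|$-dependent form you anticipate does not help in this method. The long-sum case only beats the trivial bound if you use the full large sieve with $X$ close to $Q$. At that point $|\B|X\ge Q$, so restricting the mollifier case to your sparse family gains nothing.

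You also omit the passage from characters back to $n$. Distinct $n$ can share the fundamental discriminant $D$ of $n^2-4$. The fix is to note that $(n+\sqrt{n^2-4})/2$ is a power of the fundamental unit of $\Q(\sqrt D)$, which is at least $(3+\sqrt5)/2$, so the multiplicity is $\ll\log x$.

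With this in hand, the optimisation is explicit. Write $\delta=1-\sigma$ and balance, inside $x^{1/2+\varepsilon}$, the four terms $x^{3/8}Y^{-1/4}$, $x^{2(1+2\theta)\delta+\theta}V^{-1/2}$, $V^{1/2}$ and $Yx^{-1/2}V^{-\delta}$, where $Y$ is now the smoothing length. This forces $V=x^{2(1+2\theta)\delta+\theta}$ and a common exponent $E=1/(5+2\delta)$. Here $\delta$ is the positive root of $(2(1+2\theta)\delta+\theta)(5+2\delta)=2$, and $1/2+E$ is exactly $\eta(\theta)$.

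A smaller point concerns the quaternion case. You propose running the smoothing on the cocompact group $\O^1(q)$. There is no Kuznetsov formula there, and a Luo--Sarnak bound for an oscillating sum over a Jacquet--Langlands sub-spectrum does not simply ``carry over''. The paper avoids this entirely. It uses the non-split Chebotarev theorem to reduce to $\SL_2(\Z)$ with a class function on $\SL_2(\Z/N\Z)$ for a suitable $N$, and then reduces further to congruence characters.
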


For any $\Gamma$ as above we let $\Psi_\Gamma(x) := \Psi_\Gamma(x; \mathbbm{1})$, so that 
\begin{equation*}
    \Psi_\Gamma(x) = [\{\pm I\}\Gamma: \Gamma]\sum_{\substack{\{P\}_\Gamma\\
    N(P)\leq x\\
    \Tr(P) > 2}}\log N(P_0).
\end{equation*} 
Thus, as a particular case of Theorem \ref{thm:maintheorem} we deduce the following corollary. 

\begin{cor}\label{cor:pgtforcongruencesubgroups}
Let $\Gamma\subset \SL_2(\R)$ be a congruence subgroup of an indefinite quaternion algebra over $\Q$. Then, for any admissible exponent $\theta$ we have
\begin{equation*}
    \Psi_\Gamma(x) = x + O_\varepsilon(x^{\eta(\theta) + \varepsilon}).
\end{equation*}
\end{cor}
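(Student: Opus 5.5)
The corollary follows from Theorem \ref{thm:maintheorem} by specializing to $\varphi = \mathbbm{1}$. The only thing to check is that the main term becomes exactly $x$.

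First we fix the setup. Let $\Gamma$ be a congruence subgroup of an indefinite quaternion algebra $B$ over $\Q$. By definition this means that, after choosing a maximal order $\O \subset B$ containing the relevant order, there is a level $q \geq 1$ with $\O^1(q) \subset \Gamma \subset \O^1$. If $\Gamma$ is only commensurable with $\O^1$ and contained in it after conjugation, we first conjugate inside $B$. Conjugation preserves traces and norms, so it does not change $\Psi_\Gamma(x)$. We then form the finite group $\widetilde{\Gamma} = \O^1(q)\backslash\Gamma$ and take the class function $\varphi = \mathbbm{1}$ on it.

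With this choice, the definition \eqref{eq:generalchebyshevsum} gives $\Psi_\Gamma(x;\mathbbm{1}) = \Psi_\Gamma(x)$ literally, since every factor $\varphi(P \modulo q)$ equals $1$. For the main term we compute
\begin{equation*}
\langle \mathbbm{1}, \mathbbm{1}\rangle = \frac{1}{|\widetilde{\Gamma}|}\sum_{\gamma\in\widetilde{\Gamma}} 1 = 1.
\end{equation*}

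Applying Theorem \ref{thm:maintheorem} with any admissible $\theta$ therefore yields
\begin{equation*}
\Psi_\Gamma(x) = x + O_{\varepsilon}(x^{\eta(\theta)+\varepsilon}).
\end{equation*}
The implied constant depends on $\varphi$ in general. Here $\varphi = \mathbbm{1}$ is canonical for the given $\Gamma$, so that dependence is absorbed into the dependence on $\Gamma$, which is fixed throughout.

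There is no genuine obstacle in this argument. The only point requiring care is the normalization: the factor $[\{\pm I\}\Gamma:\Gamma]$ built into \eqref{eq:generalchebyshevsum} is precisely what makes the main term exactly $x$, whether or not $-I \in \Gamma$. As a consistency check, taking $\theta = 1/6$ gives $\eta(1/6) < 25/36$, in line with the improvement over \eqref{eq:soundyoungunconditional} promised in the introduction.
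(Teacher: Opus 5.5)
Your proposal is correct and matches the paper, which obtains the corollary directly from Theorem \ref{thm:maintheorem} by taking $\varphi = \mathbbm{1}$, so that $\Psi_\Gamma(x;\mathbbm{1}) = \Psi_\Gamma(x)$ and $\langle \mathbbm{1}, \mathbbm{1}\rangle = 1$. Your extra remarks on conjugation and on the normalizing index $[\{\pm I\}\Gamma:\Gamma]$ are harmless but not needed, since the paper's setup already defines a congruence subgroup as one with $\O^1(q) \subset \Gamma \subset \O^1$.
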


In particular, since $\theta = 1/6$ is admissible and  
\begin{equation*}
    \eta(1/6) \approx 0.69359 < 0.69\overline{4} = \frac{25}{36}
\end{equation*}
we see that Corollary \ref{cor:pgtforcongruencesubgroups} improves unconditionally on \cite[Theorem 1.1]{Soundararajan_2013}.
\subsection{Acknowledgements.}
The author would like to express his gratitude to his advisor Ian Petrow for many helpful discussions and for carefully reading the present work. The author would like to thank Abdellah Sebbar for pointing out typos. This work was supported by the Engineering and Physical Sciences Research Council [EP/S021590/1], via the EPSRC Centre for Doctoral Training in Geometry and Number Theory (The London School of Geometry and Number
Theory), University College London.

\section{A zero-density estimate.}\label{sec:zerodensity}

Let $Q$ be a positive integer, and let $S(Q)$ be the set of primitive real characters of conductor up to $Q$. Heath-Brown showed that this family of arithmetic functions has strong orthogonality properties. Indeed, he proved \cite[Corollary 3]{heathbrownquadlargesieve} that the inequality
\begin{equation}\label{eq:quadlargesieve}
    \sum_{\chi \in S(Q)}\left|\sum_{n \leq N} a_n \chi(n)\right|^2 \ll_\varepsilon (QN)^\varepsilon (Q + N)N\max_{n \leq N}|a_n|^2
\end{equation}
holds for an arbitrary sequence $(a_n)$. A consequence of this large sieve inequality is the fourth-moment bound 
\begin{equation}\label{eq:fourthmomentbound}
    \sum_{\chi \in S(Q)} |L(1/2 + it, \chi)|^4 \ll (Q(|t| + 1))^{1 + \varepsilon},
\end{equation} 
see \cite[Theorem 2]{heathbrownquadlargesieve}. Let $N(\sigma, T, \chi)$ be the number of zeroes of $L(s, \chi)$ in the rectangle 
\begin{equation*}
    \sigma \leq \text{Re}(s) \leq 1, |\text{Im}(s)| \leq T,
\end{equation*}
where $1/2\leq\sigma\leq 1$ and $T\geq 1$.
Using \eqref{eq:quadlargesieve} as well as \eqref{eq:fourthmomentbound}, Heath-Brown was able to prove the zero-density estimate
\begin{equation}\label{eq:zerodensityheathbrown}
    \sum_{\chi \in S(Q)} N(\sigma, T, \chi) \ll_\varepsilon (QT)^\varepsilon Q^{3(1-\sigma)/(2-\sigma)}T^{(3-2\sigma)/(2-\sigma)},
\end{equation}
see \cite[Theorem 3]{heathbrownquadlargesieve}. Motivated by the discussion in Section \ref{subsec:mainidea}, we will use a subconvexity bound instead of \eqref{eq:fourthmomentbound} to arrive at the following zero-density estimate.
\begin{prop}\label{prop:myzerodensity}
For any Dirichlet character $\psi$ and any $\psi$-admissible pair $(\theta, A)$ we have
\begin{equation*}
    \sum_{\chi \in S(Q)} N(\sigma, T, \chi\psi) \ll_{\psi, \varepsilon} (QT)^\varepsilon Q^{2(1 + 2\theta)(1-\sigma)}T^{1 + 4A(1-\sigma)}.
\end{equation*}
\end{prop}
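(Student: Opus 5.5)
We follow Heath-Brown's proof of \eqref{eq:zerodensityheathbrown} from \cite[Section 11]{heathbrownquadlargesieve}. The one change is that the fourth-moment bound \eqref{eq:fourthmomentbound} is replaced by the pointwise subconvexity bound \eqref{eq:defipsiadmissible}, summed trivially over the family. In exponent form, the target is $Q^{2(1+2\theta)(1-\sigma)}T^{1+4A(1-\sigma)}$. We should therefore expect a zero-detection argument using two inputs. The first is the large sieve \eqref{eq:quadlargesieve}, which costs roughly $(Q+N)N\cdot N^{-2\sigma}$ for Dirichlet polynomials of length $N$. The second is a fourth power of $L$ on the half line, costing $|S(Q)|\cdot(Q^{\theta}T^{A})^4$ pointwise. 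These are balanced at a threshold length of order $Q^{2+4\theta}T^{4A}$ raised to the power $(1-\sigma)$-type exponents.

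\textbf{Step 1 (zero detection).} For $\chi \in S(Q)$ write $L(s,\chi\psi)$, and fix a mollifier $M_X(s,\chi)=\sum_{m\le X}\mu(m)\chi\psi(m)m^{-s}$. We use the standard Montgomery identity, in which smoothing by $e^{-n/Y}$ and a contour shift to $\mathrm{Re}(s)=1/2$ relate $L M_X$ to a Dirichlet polynomial. At a zero $\rho=\beta+i\gamma$ with $\beta\ge\sigma$, $|\gamma|\le T$, one obtains either
\begin{equation*}
\Big|\sum_{X<n\le Y^{1+\varepsilon}} b_n \chi\psi(n) n^{-\rho}e^{-n/Y}\Big|\gg 1
\quad\text{or}\quad
\int_{-\log^2}^{\log^2}|L(1/2+i\gamma+iv,\chi\psi)M_X(1/2+i\gamma+iv,\chi)|\,Y^{1/2-\beta}|\Gamma(1/2-\beta+iv)|\,dv\gg 1.
\end{equation*}
Here $|b_n|\le d(n)$. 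After the usual reduction we may assume the zeros counted are well spaced (distance $\ge 1$ in $\gamma$ per $\chi$), losing a factor $\log(QT)$.

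\textbf{Step 2 (the two classes).} For the first class we split dyadically into blocks $(N,2N]$ with $X<N\le Y^{1+\varepsilon}$. We raise to a power $k$ chosen so that $N^k$ is at least about $Q$; then the large sieve \eqref{eq:quadlargesieve}, together with a standard hybrid extension in $t$ (or by absorbing $n^{-i\gamma}$ through Gallagher's lemma), gives a count of
\begin{equation*}
\ll (QTN)^\varepsilon (Q T+N^k)N^{k(1-2\sigma)}.
\end{equation*}
Taking $X$ large relative to $QT$ makes this $\ll (QT)^{\varepsilon}QT\,X^{1-2\sigma}$-ish, which is acceptable. For the second class we use H\"older with exponents $(4,4/3)$ along the zeros. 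The factor $\sum|L(1/2+it,\chi\psi)|^4$ over the relevant $(\chi,\gamma)$ is bounded, using admissibility, by $R\cdot Q^{4\theta}T^{4A}(QT)^{\varepsilon}$, where $R$ is the number of such zeros. The mollifier factor $\sum|M_X|^{4/3}$ is controlled via the large sieve after raising to the second power: $M_X^2$ has length $X^2$, giving $(Q+X^2)X^2\cdot QT$-type bounds. Combined with $Y^{1/2-\sigma}$ this gives
\begin{equation*}
R\ll (QT)^\varepsilon\, R^{1/4}\,Q^{\theta}T^{A}\,\big((Q+X^2)QT\big)^{3/4}Y^{1/2-\sigma}
\end{equation*}
(schematically). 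Solving for $R$ and choosing $Y$ appropriately yields a bound of the shape $Q^{c}T^{c'}Y^{(2/3)(1-2\sigma)}$.

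\textbf{Step 3 (optimization), the main obstacle.} We choose $X\approx (QT)^{\varepsilon}$-small or of size comparable to $Q^{1/2}$, and $Y$ so that both classes give $Q^{2(1+2\theta)(1-\sigma)}T^{1+4A(1-\sigma)}$. The delicate point is that the stated bound equals $QT$ at $\sigma=1/2$ only when $\theta=1/4$, $A=1/4$, matching convexity. The argument must therefore be arranged so that the trivial large-sieve count $QT$ is interpolated with the subconvex count at $\sigma=1$ in an exponent linear in $1-\sigma$. Concretely, we expect to take $Y = Q^{2(1+2\theta)}T^{4A}$, with exponent $1-\sigma$ appearing via $Y^{1-\sigma}$-type factors after the H\"older step and with $X$ small. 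Checking that the first-class contribution $N^{k(2-2\sigma)}$-type terms are also $\le Y^{1-\sigma}\cdot T$ requires care in the choice of $k$ and in the handling of the $T$-aspect. This is where we would first follow Heath-Brown's bookkeeping verbatim and only alter the exponent at the point where \eqref{eq:fourthmomentbound} is used. We expect the $T$-dependence, which gives the extra factor $T$ from counting well-spaced ordinates, to be the fiddliest part.
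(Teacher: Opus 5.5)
Your framework (Heath-Brown's Section 11 argument with subconvexity in place of the fourth moment) is the same as the paper's. As you have set it up, however, the argument does not reach the stated exponent, for two concrete reasons.

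\textbf{The half-line case.} Admissibility bounds $|L(1/2+it,\chi\psi)|$ pointwise, so H\"older with exponents $(4,4/3)$ gains nothing; it only forces a fourth moment of the mollifier. Carried out correctly, with $\sum_j|M_j|^{4/3}\le R^{2/3}(\sum_j|M_j|^4)^{1/3}$ and $\sum_{\chi}|M_X(1/2+it,\chi\psi)|^4\ll (QT)^\varepsilon(Q+X^2)$, it gives $R\ll (QT)^\varepsilon Z^2(Q+X^2)$ with $Z=Q^{2\theta}T^{2A}Y^{1-2\sigma}$. Your schematic with $((Q+X^2)QT)^{3/4}$ is weaker still: it exceeds the trivial bound $QT$ before $Y$ even enters.

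The paper instead squares the detecting inequality, applies Cauchy--Schwarz, and uses $|L|^2\ll (QT)^\varepsilon Q^{2\theta}T^{2A}$. It then applies the quadratic large sieve to the dyadic pieces of $M_X$ alone. This gives $\sum_{\chi}\int|M_X(1/2+iu,\chi\psi)|^2\,du\ll(QT)^\varepsilon(Q+X)$, hence $N'(\sigma,\tau)\ll (QT)^\varepsilon Q^{2\theta}T^{2A}(Q+X)Y^{1-2\sigma}$. With $X=Q$ your version costs $Q^2$ where this costs $Q$. Balancing against $Y^{2-2\sigma}$, you would only obtain the $Q$-exponent $2(1+2\theta)(1-\sigma)/\sigma$, and for $\sigma$ near $1$ no choice of $X$ rescues the H\"older route.

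\textbf{The parameters.} The Dirichlet-polynomial case contributes $(QT)^\varepsilon(QX^{1-2\sigma}+Y^{2-2\sigma})$. You dropped the term $Y^{2-2\sigma}$, which comes from blocks $U\approx Y$. It forces $Y\le Q^{1+2\theta}T^{2A}$, so your choice $Y=Q^{2(1+2\theta)}T^{4A}$ yields the square of the target.

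The term $QX^{1-2\sigma}$ forces $X$ to be essentially $Q$ when $\sigma$ is near $1$, so your Step 2 (``$X$ large'') and Step 3 (``$X$ small or $\approx Q^{1/2}$'') are inconsistent. Taking $X=Q^{1/2}$ leaves $Q^{1/2}$ at $\sigma=1$. Raising to $k$-th powers cannot rescue blocks $U\in(Q^{(1+2\theta)/2},Q)$: in the $Q$-aspect $U^2$ already overshoots $Y$, so $U^{2(2-2\sigma)}>Y^{2-2\sigma}$, while $k=1$ leaves $QU^{1-2\sigma}$ too large.

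The choice that works is $X=Q$ and $Y=Q^{1+2\theta}T^{2A}$. Then all three terms are at most $Y^{2-2\sigma}=Q^{2(1+2\theta)(1-\sigma)}T^{4A(1-\sigma)}$.

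\textbf{The $T$-aspect.} No hybrid large sieve is needed. Prove the bound for the number $N'(\sigma,\tau)$ of $\chi\in S(Q)$ with a zero in a box of side $(\log QT)^{-1}$. All the shifted $u$-integrals then lie in one interval of length $\ll(QT)^\varepsilon$, so the ordinary large sieve can be applied over all of $S(Q)$. The factor $T$ is recovered by counting the $O(T(\log QT)^2)$ boxes.

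Your ``interpolation'' worry is also misplaced. At $\sigma=1/2$ the claimed bound is $Q^{1+2\theta}T^{1+2A}$, not $QT$. For $\sigma<1/2+\theta/(1+2\theta)$ it is at least $(QT)^\varepsilon QT$, so that range follows from Riemann--von Mangoldt.
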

By work of Petrow and Young we can take $(\theta, A) = (1/6, 1/6)$ in Proposition \ref{prop:myzerodensity}, see \cite{petrowyoung20} and \cite{petrowyoung}. In our application to Theorem \ref{thm:maintheorem} we will only need to use Proposition \ref{prop:myzerodensity} in $Q$-aspect. Note that Proposition \ref{prop:myzerodensity} improves on \eqref{eq:zerodensityheathbrown} in $Q$-aspect when $\theta = 1/6$ and  $\sigma \geq 7/8$. 

\begin{proof}[Proof of Proposition \ref{prop:myzerodensity}]
In this proof the character $\chi\psi$ is given by $n \mapsto \chi(n)\psi(n)$ and is not necessarily primitive. This is not an issue since $L(s, \chi\psi)$ and $L(s, (\chi\psi)^*)$ have the same zeros in the region $\text{Re}(s) > 0$, where $(\chi\psi)^*$ is the primitive character corresponding to $\chi\psi$. To prove Proposition \ref{prop:myzerodensity} we will follow \cite[Section 11]{heathbrownquadlargesieve} very closely. First of all, note that we can assume that $\sigma \geq 1/2 + \theta/(1 + 2\theta) $, since otherwise Proposition \ref{prop:myzerodensity} follows from the Riemann--von Mangoldt formula \cite[Theorem 5.8]{iwanieckowalski}. Also, since $\psi$ is fixed, we can assume that $\chi\psi$ is nontrivial by ignoring the contribution of at most a single $\chi$.  Define
\begin{equation*}
\begin{aligned}
    \mathcal{R}(\sigma, \tau) & := [\sigma, \sigma + (\log QT)^{-1}] \times [\tau, \tau + (\log QT)^{-1}],\\
    N'(\sigma, \tau) & := \#\{\chi \in S(Q): L(s, \chi \psi) \text{ has a zero in } \mathcal{R}(\sigma, \tau)\}.
\end{aligned}
\end{equation*}
By subdividing the region $[\sigma, 1]\times [-T, T]$ into small rectangles and summing the different contributions trivially, we see that in order to prove Proposition \ref{prop:myzerodensity} it is enough to show that  
\begin{equation}\label{eq:tobeproved}
    N'(\sigma, \tau) \ll_{\psi, \varepsilon} (QT)^\varepsilon Q^{2(1 + 2\theta)(1-\sigma)}T^{4A(1-\sigma)}
\end{equation}
holds for all $(\sigma, \tau) \in [1/2 + \theta/(1 + 2\theta), 1]\times [-T, T]$. Let $100 \leq X \leq Y \leq (QT)^{100}$ be large parameters and consider 
\begin{equation}\label{eq:inversedirpoly}
    M_X(s,\chi\psi) := \sum_{n \leq X}\frac{\mu(n)\chi(n)\psi(n)}{n^s}.
\end{equation}
If $\rho = \beta + i\gamma \in \mathcal{R}(\sigma, \tau) $ is a zero of $L(s, \chi\psi)$, then a routine argument \cite[Equation (12.24)]{montgomerytopics} using Mellin inversion shows that 
\begin{equation}\label{eq:auxeq}
    e^{-1/Y} + \sum_{n > X}\frac{a(n)\chi(n)\psi(n)}{n^\rho}e^{-n/Y} = \frac{1}{2\pi i}\int_{(1/2 - \text{Re}(\rho))} L(s + \rho, \chi\psi)M_X(s + \rho, \chi\psi)\Gamma(s)Y^s \, ds,
\end{equation}
where 
\begin{equation*}
    a(n) := \sum_{\substack{d \mid n\\ d \leq X}} \mu(d), \quad \text{ so that }\quad |a(n)| \leq \tau(n).
\end{equation*}
Let $\mathcal{U} := \{2^i X: 0 \leq i \leq \log_2(Y^2/X)\}$. Note that $|\mathcal{U}| \asymp \log Y$. Equation \eqref{eq:auxeq} implies that either we have
\begin{equation*}
\left|\int_{-(QT)^\varepsilon}^{(QT)^\varepsilon} L(1/2 + i\gamma + iu, \chi \psi)M_X(1/2 + i\gamma + iu, \chi \psi)Y^{1/2 - \beta + iu}\Gamma(1/2 - \beta + iu)\, du\right| \gg 1,
\end{equation*} 
or else there is $U \in \mathcal{U}$ such that 
\begin{equation*}
    \left|\sum_{U< n \leq 2U}\frac{a(n)\chi(n)\psi(n)}{n^\rho}e^{-n/Y}\right| \gg |\mathcal{U}|^{-1}
\end{equation*}
Therefore, using Equation \eqref{eq:auxeq} for each $\chi \in S(Q)$ such that $L(s, \chi \psi)$ has a zero in $\mathcal{R}(\sigma, \tau)$ we arrive at two different cases:
\begin{enumerate}
    \item [(1)] There are $\gg N'(\sigma, \tau)$ characters $\chi \in S(Q)$ for which
\begin{equation}\label{eq:firstcase}
    \left|\int_{-(QT)^\varepsilon}^{(QT)^\varepsilon} L(1/2 + i\gamma_\chi + iu, \chi \psi)M_X(1/2 + i\gamma_\chi + iu, \chi \psi)Y^{1/2 - \beta_\chi + iu}\Gamma(1/2 - \beta_\chi + iu)\, du\right| \gg 1
\end{equation}
holds for some $\rho_\chi = \beta_\chi + i\gamma_\chi \in \mathcal{R}(\sigma, \tau)$.
    \item [(2)] Otherwise, there is an integer $U \in \mathcal{U}$ and $\gg N'(\sigma, \tau)|\mathcal{U}|^{-1}$ characters $\chi \in S(Q)$ for which
\begin{equation*}
    \left|\sum_{U< n \leq 2U}\frac{a(n)\chi(n)\psi(n)}{n^{\rho_\chi}}e^{-n/Y}\right| \gg |\mathcal{U}|^{-1}
\end{equation*}
holds for some $\rho_\chi = \beta_\chi + i\gamma_\chi \in \mathcal{R}(\sigma, \tau)$.
\end{enumerate}
We can deal with the second case exactly as in Heath-Brown, obtaining  
\begin{equation*}
    N'(\sigma, \tau) \ll_{\psi, \varepsilon} (QT)^\varepsilon (QX^{1-2\sigma} + Y^{2-2\sigma})
\end{equation*}
in that case. Our proof differs from \cite[Section 11]{heathbrownquadlargesieve} in the treatment of the first case. Squaring \eqref{eq:firstcase}, applying Cauchy-Schwarz and the subconvexity bound from \eqref{eq:defipsiadmissible} we deduce that 
\begin{equation}\label{eq:auxcase2intermediate}
    N'(\sigma, \tau) \ll_{\psi, \varepsilon} (QT)^{\varepsilon} Q^{2\theta}T^{2A} Y^{1-2\sigma}\int_{\tau -(QT)^\varepsilon}^{\tau + (QT)^\varepsilon} \sum_{\chi \in S(Q)}|M_X(1/2 + iu, \chi\psi)|^2 \, du.
\end{equation}
We can write 
\begin{equation*}
    M_X(s, \chi\psi) = \sum_{i = 0}^{\lfloor \log_2 X\rfloor} M_{X, i}(s, \chi\psi)
\end{equation*}
where 
\begin{equation*}
    M_{X, i}(s, \chi\psi) := \begin{dcases}
        \sum_{2^i \leq n < 2^{i + 1}}\frac{\mu(n)\chi(n)\psi(n)}{n^s}, & 0 \leq i < \lfloor \log_2 X\rfloor,\\
        \sum_{2^i \leq n \leq X}\frac{\mu(n)\chi(n)\psi(n)}{n^s}, & i = \lfloor \log_2 X\rfloor.
    \end{dcases}
\end{equation*}
It follows from \eqref{eq:auxcase2intermediate} that 
\begin{equation*}
    N'(\sigma, \tau) \ll_{\psi, \varepsilon} (QT)^{\varepsilon} Q^{2\theta}T^{2A} Y^{1-2\sigma}\sum_{i = 0}^{\lfloor \log_2 X\rfloor}\int_{\tau -(QT)^\varepsilon}^{\tau + (QT)^\varepsilon} \sum_{\chi \in S(Q)}|M_{X, i}(1/2 + iu, \chi\psi)|^2 \, du,
\end{equation*}
and applying the large sieve inequality \eqref{eq:quadlargesieve} we deduce that
\begin{equation*}
    N'(\sigma, \tau) \ll_{\psi, \varepsilon} (QT)^{\varepsilon} Q^{2\theta}T^{2A} (Q + X)Y^{1 - 2\sigma}.
\end{equation*}
It follows that, in any case, we have 
\begin{equation*}
    N'(\sigma, \tau)  \ll_{\psi, \varepsilon} (QT)^\varepsilon \left(Q^{2\theta}T^{2A}(Q + X)Y^{1-2\sigma} + QX^{1-2\sigma} + Y^{2-2\sigma}\right). 
\end{equation*}
If we let $X = Q$ and $Y = Q^{1 + 2\theta}T^{2A}$ we arrive at \eqref{eq:tobeproved}, which concludes the proof of Proposition \ref{prop:myzerodensity}.

\end{proof}

\section{Preparations for the proof of Theorem \ref{thm:maintheorem}.}
Recall from \cite[Definition 1.6]{reche2026chebotarevgeodesictheoremsplit} that a congruence character of level $q$ is a pair $(\Gamma, \chi)$ where $\{\pm I\}\Gamma(q)\subset \Gamma \subset \SL_2(\Z)$ is a congruence subgroup and $\chi: \Gamma\rightarrow S^1$ is a one-dimensional character which is trivial on $\Gamma(q)$. We can also view $\chi$ as a class function on $\widetilde{\Gamma} = \Gamma(q)\backslash \Gamma$. We denote by $k \in \{0, 1\}$ the weight of $\chi$, so that $\chi(-I) = (-1)^k$.
We let $\{u_j\}_j$ be an orthonormal basis of $\chi$-automorphic Maass cuspidal forms with Laplacian eigenvalue $\lambda_j = 1/4 + t_j^2$, see \cite[Section 3.1]{reche2026chebotarevgeodesictheoremsplit} for more details. We know that
\begin{enumerate}
    \item [i)] If $k = 1$, then $\lambda_j \geq 1/4$ \cite[Equation (4.40)]{DFIsubconvexityartin}. In this case we  assume that each $t_j$ satisfies $t_j \geq 0$.  
    \item [ii)] If $k = 0$, then $\lambda_j \geq 1/4 - (7/64)^2$, see the second appendix to \cite{kimramakrishnansarnak2003}. If $t_j$ is purely imaginary, we assume that $\text{Im}(t_j) > 0$, and if $t_j$ is real we assume that $t_j \geq 0$. 
\end{enumerate}
Given $100 \leq T \leq x^{1/2}$ we let 
\begin{equation*}
    \S(T, x) := \sum_{0 \leq t_j \leq T} x^{it_j}.
\end{equation*}   
We know that \cite[Theorem 1.7]{reche2026chebotarevgeodesictheoremsplit}
\begin{equation}\label{eq:luosarnakbound}
     \S(T, x) \ll_\varepsilon T^{5/4}x^{1/8 + \varepsilon},
\end{equation}
which generalizes Equation (58) in \cite{luosarnak}. We also know that  \cite[Lemma 5.2]{reche2026chebotarevgeodesictheoremsplit}
\begin{equation}\label{eq:iwaniecslemma}
    \Psi_\Gamma(x; \chi) = \delta_{\chi = 1}x + \sum_{0 \leq t_j \leq T}\left(\frac{x^{1/2 + it_j}}{1/2 + it_j} + \frac{x^{1/2 - it_j}}{1/2 - it_j}\right) + O\left(\frac{x (\log x)^2}{T} + \delta_{k = 0} x^{1/2 + 7/64}\right)
\end{equation}
for $1 \leq T \leq x^{1/2}(\log x)^{-2}$, which  generalizes \cite[Lemma 1]{Iwaniec1984}. Note that in \eqref{eq:iwaniecslemma} we have included the possible small eigenvalues in the error term.

We let $\ell \geq 1$ denote the width of the cusp $\infty$ with respect to $\Gamma$, so that we have $\Gamma_\infty = \{\pm \begin{psmallmatrix}
    1 & \ell \Z\\
    0 & 1
\end{psmallmatrix}\}$. In \cite[Section 9.1]{reche2026chebotarevgeodesictheoremsplit} we introduced certain functions $Z(s; n)$ in order to generalize Equation \eqref{eq:connectiontoLvalues} above. More precisely, for $|n| \geq 3$ the function $Z(s; n)$ is defined in $\text{Re}(s) > 1$ by an absolutely convergent Dirichlet series 
\begin{equation*}
    l^s Z(s; n) := \sum_{c \geq 1}\frac{a(c; n)}{c^s}
\end{equation*}
for some complex numbers $a(c; n)$, and admits a meromorphic continuation to $\CC$ with a simple pole at $s = 1$ as its only possible singularity on $\text{Re}(s) \geq 1/2$, see \cite[Lemma 9.1 and Equation (132)]{reche2026chebotarevgeodesictheoremsplit}. Equation \eqref{eq:connectiontoLvalues} can then be generalized to 
\begin{equation}\label{eq:pgtsumofresidues}
    \Psi_\Gamma(x; \chi) = \frac{\vol(\Gamma\backslash \Hyp)\pi}{2}\sum_{2 < n \leq X} (n^2 - 4)^{1/2}\text{Res}_{s = 1}Z(s; n),
\end{equation}
see \cite[Equation (134)]{reche2026chebotarevgeodesictheoremsplit}. Informally, the coefficients $a(c; n)$ are weighted counts of solutions to certain quadratic congruences modulo $cq$. Splitting these solutions by an archimedean sign one can write
\begin{equation}\label{eq:zetaintermsofsignedzetas}
    Z(s; n) = Z^+(s; n) + \chi(-I)Z^+(s; -n),
\end{equation}
where $Z^+(s; n)$ has similar properties to $Z(s; n)$,
see \cite[Section 9.4]{reche2026chebotarevgeodesictheoremsplit}. We know that if $\theta$ is any admissible exponent, then there exists $B > 0$ such that for any $\varepsilon>0$ the bound
\begin{equation}\label{eq:subconvexitybykovskii}
    Z^+(s; n) \ll_{q, \theta, \varepsilon} (1 + |s|)^B |n|^{2\theta + \varepsilon}
\end{equation}  
holds uniformly for $|n| \geq 3$ and $s$ satisfying $\text{Re}(s) \geq 1/2$ and $|s-1| \geq \varepsilon$, see \cite[Proposition 9.7]{reche2026chebotarevgeodesictheoremsplit}. Furthermore, by grouping the solutions which the coefficients $a(c; n)$ count by their reduction modulo $q$ one can derive the expression
\begin{equation}\label{eq:expressionforgeneralzagierzeta}
    Z^+(s; n) = \frac{\ell^{1-s}}{q\varphi(q)}\sum_{\substack{\widetilde{\gamma} \in \widetilde{\Gamma}\\
    \Tr(\widetilde{\gamma}) = n}}\overline{\chi(\widetilde{\gamma})}\sum_{a\in (\Z/q\Z)^\times}Z_q(s; n; \widetilde{\gamma}; a)\sum_{\psi \modulo q} \overline{\psi(a)}H(s; n; \psi)\frac{L(s; \psi)}{L(2s; \psi)}L(s; \psi_D \psi),
\end{equation}
see the proof of Proposition 9.7 in \cite{reche2026chebotarevgeodesictheoremsplit}. In \eqref{eq:expressionforgeneralzagierzeta}, $D$ is the fundamental discriminant for which we can write $n^2 - 4 =Df^2$ for some integer $f$, $\psi_D$ is the real Dirichlet character associated to $D$, and the functions $s \mapsto Z_q(s; n; \widetilde{\gamma}; a)$ and $s \mapsto  H(s; n; \psi)$ are holomorphic on $\text{Re}(s) \geq 1/2$ and satisfy 
\begin{equation}\label{eq:boundsbadprimes}
    Z_q(s; n; \widetilde{\gamma}; a), H(s; n; \psi) \ll_{q, \varepsilon} |ns|^\varepsilon
\end{equation}
for $|n| \geq 3$ and $\text{Re}(s) \geq 1/2$. 

\begin{lem}\label{lem:numberofrepeatedfundamentaldiscriminants}
For each fundamental discriminant $D > 0$ we have  
\begin{equation*}
    \#\{3 \leq n \leq X : n^2 - 4 = Df^2 \text{ for some }f\} \ll \log X,
\end{equation*}
where the implied constant is independent in $D$.
\end{lem}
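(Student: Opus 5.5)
The plan is to reduce the count to solutions of a Pell-type equation. For $n^2 - 4 = Df^2$ we have $n^2 - Df^2 = 4$, so each admissible $n$ comes with a solution $(n, f)$, $f \geq 1$, of the Pell equation $x^2 - Dy^2 = 4$. The map $n \mapsto (n,f)$ is injective, since $f$ is determined by $n$: it is the positive square root of $(n^2-4)/D$. So it suffices to count solutions $(x,y)$ of $x^2 - Dy^2 = 4$ with $x,y > 0$ and $x \leq X$.

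The next step uses the classical structure of these solutions. Since $D > 0$ is a fundamental discriminant, $D \equiv x^2 \pmod 4$ whenever $x^2 - Dy^2 = 4$ with $y$ odd, and the elements $\varepsilon = (x + y\sqrt D)/2$ are exactly the units of norm $1$ in the order $\Z[(D+\sqrt D)/2]$ (the full ring of integers of $\Q(\sqrt D)$). The positive solutions therefore correspond to the powers $\varepsilon_D^k$, $k \geq 1$, of the fundamental totally positive unit $\varepsilon_D > 1$ of norm $1$. Concretely, writing $\varepsilon_D^k = (x_k + y_k\sqrt D)/2$, every solution with $x,y>0$ is $(x_k, y_k)$ for some $k \geq 1$.

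It remains to bound the number of $k$ with $x_k \leq X$. From $\varepsilon_D^{-k} = (x_k - y_k\sqrt D)/2$ we get $x_k = \varepsilon_D^k + \varepsilon_D^{-k} \geq \varepsilon_D^k$. Hence $x_k \leq X$ forces $k \leq \log X/\log \varepsilon_D$.

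The only point needing care, and the main obstacle, is a lower bound for $\varepsilon_D$ that is uniform in $D$. This is immediate: $\varepsilon_D = (x_1 + y_1\sqrt D)/2$ with $x_1 \geq 3$, because $x_1 = 1, 2$ give $Dy_1^2 = -3, 0$, impossible for $y_1 \geq 1$. Also $y_1 \geq 1$ and $D \geq 5$, so $\varepsilon_D \geq (3 + \sqrt5)/2$. Therefore the count is at most $\log X/\log((3+\sqrt5)/2) \ll \log X$, with an absolute implied constant.

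If one prefers to avoid algebraic number theory, the same structure can be obtained elementarily. The solutions of $x^2 - Dy^2 = 4$ in positive integers, ordered by $x$, satisfy the recursion coming from multiplication by the minimal solution. Equivalently, the quantities $(x+y\sqrt D)/2$ are closed under multiplication and division, and have no elements strictly between $1$ and the minimal one. Either route gives geometric growth of the admissible $n$ with ratio at least $(3+\sqrt5)/2$, which yields the lemma.
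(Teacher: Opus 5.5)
Your proposal is correct and follows essentially the same route as the paper: identify $(n+f\sqrt D)/2$ with a norm-one unit of $\O_K$, write it as a positive power of the fundamental norm-one unit, which is at least $(3+\sqrt5)/2$, and conclude that there are at most $\log X/\log((3+\sqrt5)/2)$ admissible exponents. You also supply the justification of the uniform lower bound $\varepsilon_D\geq(3+\sqrt5)/2$ (via $x_1\geq 3$ and $D\geq 5$), which the paper only describes as ``easy to see''.
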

\begin{proof}
Let $K$ be the field $Q(\sqrt{D})$, with ring of integers $\O_K$. We consider $K$ embedded in $\R$ so that $\sqrt{D} > 0$. By Dirichlet's unit theorem we know that $\O_K^1 = \{\pm z_K^m: m \in \Z\}$ for a unique $z_K$ satisfying $z_K > 1$. It is easy to see that we have $z_K \geq (3 + \sqrt{5})/2$. If $n \geq 3$ is such that $n^2 - 4 = Df^2$ for some integer $f$, then the element $z_n := (n + \sqrt{n^2 - 4})/2$ belongs to $\O_K^1$ and satisfies $z_n > 1$, so we can write $z_n = z_K^m$ for some $m \geq 1$. It follows that 
\begin{equation*}
    \#\{3 \leq n \leq X : n^2 - 4 = Df^2 \text{ for some }f\} \ll \frac{\log X}{\log z_K} \ll \log X,
\end{equation*}
as desired.
\end{proof}

\begin{lem}\label{lem:lindelof}
If $L(s, \psi_D\psi)$ has no zero in the rectangle $[\sigma, 1]\times [-T, T]$, then 
\begin{equation*}
    L(s, \psi_D\psi) \ll_{q, \varepsilon} (DT)^\varepsilon
\end{equation*}
for $s$ such that $\text{Re}(s) > \sigma + \varepsilon$, $|\text{Im}(s)| < T - 1$ and $|s-1| \geq \varepsilon$. 
\end{lem}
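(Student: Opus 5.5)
This is the classical argument (Titchmarsh, Theorem 14.2; Bykovskii, Lemma 1): Borel--Carathéodory applied to $\log L(s,\psi_D\psi)$, then Hadamard's three circles theorem. Write $\chi' := \psi_D\psi$, of conductor $\ll_q D$. If $\chi'$ is principal, $L(s,\chi')$ is $\zeta(s)$ times finitely many Euler factors at primes dividing $qD$. These factors are $\ll (D)^\varepsilon$ on $\text{Re}(s)\geq 1/2$, and $\zeta(s)\ll T^\varepsilon$ away from $s=1$ in the stated region follows from the same argument below applied to $(s-1)\zeta(s)$. So I assume $\chi'$ is nonprincipal, hence $L(s,\chi')$ is entire.

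We have $|\chi'(n)|\leq 1$ for all $n$, so for $\text{Re}(s)\geq 1+\delta$ we get $|\log L(s,\chi')|\ll \log(1/\delta)$. Convexity (Phragmén--Lindelöf) gives the polynomial bound $|L(s,\chi')|\ll_q (D(|t|+2))^{1}$ on $\text{Re}(s)\geq \sigma-\varepsilon/4$, say.

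Fix $s_0=\sigma_0+it_0$ with $\sigma_0>\sigma+\varepsilon$ and $|t_0|<T-1$. Put $\alpha := 2+it_0$ and consider discs centered at $\alpha$. The disc of radius $R:=2-\sigma-\varepsilon/2$ lies inside $\{\text{Re}(s)>\sigma-\varepsilon/2\}$, and because $|t_0|<T-1$ it also lies inside $|\text{Im}(s)|<T$ up to a harmless shrinking. One point needs care here: the disc must not poke out of the zero-free rectangle $[\sigma,1]\times[-T,T]$. To ensure this I shrink the radius so that its real reach is $\sigma+\varepsilon/2$, and let the imaginary extent satisfy $R<1$ when needed, which is fine since $R\leq 3/2$; the vertical room is then handled by $|t_0|<T-1$ together with $R\le 2-\sigma$. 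If $R>1$ there is a worry about the region $|\text{Im}|\geq T$; in that case I use centers $\alpha=1+\varepsilon'+it_0$ and radius slightly less than $1$, which keeps the disc inside the rectangle.

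On the closed disc $L$ has no zeros. It has none in the rectangle by hypothesis, and none in $\text{Re}(s)>1$ by the Euler product. So $\log L$ is holomorphic on the disc, and its real part satisfies $\log|L|\leq C\log(DT)$ there. Borel--Carathéodory, comparing radii $R$ and $R-\varepsilon/4$, gives $|\log L(s,\chi')|\ll_\varepsilon \log(DT)$ on the smaller disc.

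Next, apply Hadamard's three circles theorem to $\log L$ with radii $r_1=1-\eta$, $r_2=2-\sigma_0$ and $r_3=R-\varepsilon/4$. The inner circle lies in $\text{Re}(s)\geq 1+\eta$, where $\log L\ll\log(1/\eta)$. On the outer circle we have the bound $\ll\log(DT)$. This yields $|\log L(s_0)|\ll (\log(1/\eta))^{1-a}(\log DT)^{a}$, where $a=\log(r_2/r_1)/\log(r_3/r_1)<1$ is bounded away from $1$ because $r_2<r_3$ by a margin $\gg\varepsilon$. Choosing $\eta=1/\log(DT)$ gives $\log|L(s_0)|\leq a'\log\log(DT)\cdot$ const plus $o(\log DT)$, and more precisely $|\log L(s_0)|\ll (\log DT)^{a}(\log\log DT)^{1-a}\leq \varepsilon\log(DT)+O_{\varepsilon}(1)$. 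Exponentiating gives $L(s_0,\chi')\ll_{q,\varepsilon}(DT)^\varepsilon$.

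The main obstacle is the geometric bookkeeping, with $\varepsilon$ needing rescaling throughout. The discs have to stay inside the zero-free region, including near the top and bottom edges where $|t_0|$ is close to $T-1$, which is why the hypothesis gives a margin of $1$. They also have to stay away from the pole at $s=1$ in the principal case, which is where the condition $|s-1|\geq\varepsilon$ is used, via working with $(s-1)L$.
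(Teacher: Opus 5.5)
Your route is the classical argument that the paper relies on: Borel--Carath\'eodory for $\log L(s,\psi_D\psi)$ on a zero-free disc, followed by Hadamard's three circles theorem. The paper gives no proof of its own, only a reference to Bykovskii's Lemma 1. The analytic ingredients are fine: the value at the centre, the polynomial bound for $\log|L|$, the choice of $\eta$, and the final exponentiation.

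However, the step you flag as the main obstacle fails as written.
\begin{itemize}
\item With centre $2+it_0$ and radius $R=2-\sigma-\varepsilon/2$, the part of the disc with $\text{Re}(s)\le 1$ is where the hypothesis, not the Euler product, must exclude zeros. That part has half-height $\sqrt{R^2-1}$, which exceeds $1$ once $\sigma<2-\sqrt2-\varepsilon/2$. So for $\sigma$ near $1/2$ and $|t_0|$ close to $T-1$, the disc can contain zeros just above height $T$ with real part near $1$. For example, with $\sigma=1/2$ and $t_0=T-1.01$, the point $0.99+i(T+0.03)$ lies in the disc.
\item Shrinking the disc is not harmless. The outer radius must exceed $r_2=2-\sigma_0\approx 3/2$ with room to spare.
\item Your fallback, centre $1+\varepsilon'+it_0$ with ``radius slightly less than $1$'', reaches $\text{Re}(s)\approx\varepsilon'<\sigma$, so that disc is not zero-free either. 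Its radius would have to be $1+\varepsilon'-\sigma-\varepsilon/2$, and then the radii $r_1=1-\eta$, $r_2=2-\sigma_0$ you use afterwards no longer apply.
\end{itemize}

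A clean repair is to centre at $7/4+it_0$.
\begin{itemize}
\item Take $R=7/4-\sigma-\varepsilon/2<5/4$. Then the half-height at $\text{Re}(s)=1$ is $\sqrt{R^2-9/16}<1$, so the disc stays inside the zero-free region.
\item Use $r_1=1/2$, so the inner circle lies in $\text{Re}(s)\ge 5/4$, where $\log L=O(1)$. Take $r_2=7/4-\sigma_0$ and $r_3=R-\varepsilon/4$.
\item This gives $1-a\ge\log(1+\varepsilon/5)/\log(5/2)\gg\varepsilon$, hence $|\log L(s_0)|\ll_\varepsilon(\log qDT)^{1-c\varepsilon}$, which is enough. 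The case $\sigma_0\ge 5/4$ is trivial.
\end{itemize}

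Second, in the principal case the auxiliary function $(s-1)\zeta(s)$ fails for large $|t_0|$. On the inner circle, $|\log((s-1)\zeta(s))|\asymp\log|t_0|$, so three circles only yields $\log|(s_0-1)\zeta(s_0)|\ll\log T$, i.e.\ a power of $T$. Instead, proceed as follows.
\begin{itemize}
\item Note that $\psi_D\psi$ is principal only when $|D|$ equals the conductor of $\psi$, so for $O_q(1)$ values of $D$.
\item The Euler factors at $p\mid qD$ do not vanish for $\text{Re}(s)>0$, so $\zeta$ inherits the zero-free hypothesis.
\item For $|t_0|\ge 2$, run the disc argument on $\zeta$ itself. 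The discs above then stay at distance $>1/2$ from $s=1$.
\item For $|t_0|<2$, the bound $\zeta(s)\ll_\varepsilon 1$ on $\{1/2\le\text{Re}(s)\le 2,\ |\text{Im}(s)|\le 2,\ |s-1|\ge\varepsilon\}$ follows from compactness.
\end{itemize}
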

\begin{proof}
This is well-known, see for example \cite[Lemma 1]{bykovskiiprimegeodesicshortintervals}.
\end{proof}

\begin{lem}\label{lem:mostzetasatisfylindelof}
Let $1/2 \leq \sigma \leq 1$ and $x \geq 1$. For any $\varepsilon > 0$ there is a set $\B_\sigma \subset (x^{1/2}, 2x^{1/2}]$ of cardinality $|\B_\sigma| \ll_\varepsilon x^{2(1 + 2\theta)(1-\sigma) + \varepsilon}$ such that the bound 
\begin{equation}\label{eq:lindelofsigma}
    Z(s; n) \ll_\varepsilon x^\varepsilon
\end{equation}
holds uniformly for $n \in (x^{1/2}, 2x^{1/2}] \setminus \B_\sigma$ and all $s$ such that $\text{Re}(s) \geq \sigma$, $|\text{Im}(s)|\leq x^\varepsilon$ and $|s-1| \geq \varepsilon$.  
\end{lem}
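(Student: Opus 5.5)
We define $\B_\sigma$ as the set of $n \in (x^{1/2}, 2x^{1/2}]$ for which some $L(s, \psi_D\psi)$ has a zero in $[\sigma - \varepsilon, 1]\times[-2x^{\varepsilon}, 2x^{\varepsilon}]$. Here $\psi$ runs over the characters modulo $q$, and $D = D(n)$ or $D(-n)$ is the fundamental discriminant attached to $n^2-4 = Df^2$. Note that $(-n)^2 - 4 = n^2 - 4$, so both signs in \eqref{eq:zetaintermsofsignedzetas} involve the same $D$. For $n \notin \B_\sigma$ we bound each factor of \eqref{eq:expressionforgeneralzagierzeta} in the region $\text{Re}(s)\geq \sigma$, $|\text{Im}(s)|\leq x^\varepsilon$, $|s-1|\geq\varepsilon$:
\begin{itemize}
\item Lemma \ref{lem:lindelof}, applied with $\sigma-\varepsilon$ in place of $\sigma$ and $T = 2x^\varepsilon$, gives $L(s,\psi_D\psi)\ll (Dx)^{\varepsilon}\ll x^{O(\varepsilon)}$, since $D \leq n^2 \leq 4x$.
\item The factors $Z_q$ and $H$ are $\ll x^{\varepsilon}$ by \eqref{eq:boundsbadprimes}.
\item The ratio $L(s,\psi)/L(2s,\psi)$ is bounded by a constant depending on $q$ and $\varepsilon$. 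The denominator is harmless because $\text{Re}(2s)\geq 1$, using the classical bound $L(2s,\psi)^{-1} \ll \log(2+|s|)$ on the line $\text{Re}=1$. The numerator is bounded since $|\text{Im}(s)|\leq x^\varepsilon$ and $|s-1|\geq\varepsilon$, up to a factor $x^{O(\varepsilon)}$ from convexity.
\item The number of terms in the sums over $\widetilde\gamma$, $a$ and $\psi$ depends only on $q$.
\end{itemize}
Summing via \eqref{eq:zetaintermsofsignedzetas} yields \eqref{eq:lindelofsigma} after rescaling $\varepsilon$.

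It remains to count $\B_\sigma$. For each $n\in\B_\sigma$ there is a character $\psi$ modulo $q$ (at most $\varphi(q)$ choices) and a zero of $L(s,\psi_D\psi)$ in the rectangle. The character $\psi_D$ is a primitive quadratic character of conductor $|D|\leq 4x$. So $n \mapsto D$ maps $\B_\sigma$ into the set of $\chi\in S(4x)$ with $N(\sigma-\varepsilon, 2x^\varepsilon,\chi\psi)\geq 1$ for some $\psi$. By Lemma \ref{lem:numberofrepeatedfundamentaldiscriminants} each $D$ has at most $O(\log x)$ preimages. (For $D<0$ there are no solutions with $n\geq3$, and the lemma handles $D>0$.) Hence
\begin{equation*}
|\B_\sigma| \ll \log x\sum_{\psi \modulo q}\sum_{\chi\in S(4x)} N(\sigma-\varepsilon, 2x^\varepsilon, \chi\psi).
\end{equation*}

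Now apply Proposition \ref{prop:myzerodensity} with $Q = 4x$ and $T = 2x^\varepsilon$, using a $\psi$-admissible pair $(\theta, A_\psi)$; these exist for each of the finitely many $\psi$ since $\theta$ is admissible. This gives
\begin{equation*}
|\B_\sigma| \ll x^{O(\varepsilon)}\, x^{2(1+2\theta)(1-\sigma+\varepsilon)}\, x^{\varepsilon(1+4A)},
\end{equation*}
which is $x^{2(1+2\theta)(1-\sigma)+O(\varepsilon)}$. Renaming $\varepsilon$ finishes the proof; the implied constants may depend on $q$ and $\theta$, which are fixed.

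The main obstacle is the bookkeeping of $\varepsilon$-losses at the edges. We shrink $\sigma$ to $\sigma-\varepsilon$ and enlarge $T$ so that Lemma \ref{lem:lindelof} covers the whole region $\text{Re}(s)\geq\sigma$, $|\text{Im}(s)| \leq x^\varepsilon$, and we must check that the pole of $L(s,\psi)$ at $s=1$ for trivial $\psi$ is excluded by the condition $|s-1|\geq\varepsilon$.
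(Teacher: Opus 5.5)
Your proposal is correct and follows the paper's route: the paper's proof only cites Proposition \ref{prop:myzerodensity}, the identities \eqref{eq:zetaintermsofsignedzetas} and \eqref{eq:expressionforgeneralzagierzeta}, the bounds \eqref{eq:boundsbadprimes}, and Lemmas \ref{lem:numberofrepeatedfundamentaldiscriminants} and \ref{lem:lindelof}, and you combine exactly these ingredients in the intended way (defining $\B_\sigma$ by zeros of the $L(s,\psi_D\psi)$ in a slightly enlarged rectangle, bounding $Z(s;n)$ off $\B_\sigma$, and counting $\B_\sigma$ via $n\mapsto D$ with $O(\log x)$ multiplicity). Your final renaming of $\varepsilon$ is valid only under the usual convention that $\varepsilon$ may change between occurrences, because the height $T=2x^{\varepsilon}$ already costs a factor $x^{(1+4A)\varepsilon}$ in the count; the paper's statement is equally loose on this point.
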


\begin{proof}
The lemma follows by combining Proposition \ref{prop:myzerodensity}, identities \eqref{eq:zetaintermsofsignedzetas} and \eqref{eq:expressionforgeneralzagierzeta}, the bounds in \eqref{eq:boundsbadprimes}, and Lemmas \ref{lem:numberofrepeatedfundamentaldiscriminants} and \ref{lem:lindelof}.
\end{proof}

\section{Proof of Theorem \ref{thm:maintheorem}.}\label{sec:proofmaintheorem}
Recall the setup of quaternion algebras introduced above before the statement of Theorem \ref{thm:maintheorem}. If $B$ is a division algebra, then by \cite[Theorem 1.2]{reche2026chebotarevgeodesictheoremnonsplit} we know that there exists a class function $f$ on $\SL_2(\Z/(q\disc(\D))\Z)$ such that 
\begin{equation*}
\Psi_{\Gamma}(x; \varphi) = \Psi_{\SL_2(\Z)}(x; f) + O_{\varphi, \varepsilon}(x^{1/2 + \varepsilon}).
\end{equation*}
Thus, in Theorem \ref{thm:maintheorem} we can restrict ourselves to the case $B = M_2(\Q)$ and $\O = M_2(\Z)$. Also, the same argument as in \cite[Section 2]{reche2026chebotarevgeodesictheoremsplit} shows the following variant of \cite[Proposition 2.3]{reche2026chebotarevgeodesictheoremsplit}: proving Theorem \ref{thm:maintheorem} for $B = M_2(\Q)$ and $\O = M_2(\Z)$ is equivalent to proving that
\begin{equation}\label{eq:newexponentcongruencecharacters}
    \Psi_{\Gamma}(x; \chi) = \delta_{\chi = 1} x +  O_{q, \varepsilon}(x^{\eta(\theta) + \varepsilon})
\end{equation}
holds for any congruence character $(\Gamma, \chi)$ of any level $q$. From now on we fix a congruence character $(\Gamma, \chi)$ of level $q$ and work towards proving \eqref{eq:newexponentcongruencecharacters}. Recall that we let $k \in \{0, 1\}$ denote the weight of $\chi$.

Let $\omega \in C_c^\infty(0, 1)$ satisfy $\int_\R \omega(u) \, du = 1$, let $Y$ be a parameter satisfying $x^{1/2 + \varepsilon} \leq Y \leq x/\log x$ and define $\omega_Y(u) := \omega(u/Y)/Y$. We have the following generalization of \eqref{eq:soundyoungsmoothing}.
\begin{prop}\label{prop:smoothingsum}
We have 
\begin{equation}\label{eq:smoothingtobeproved}
    \Psi_\Gamma(x; \chi) = \delta_{\chi = 1}x - \int_0^Y (\Psi_\Gamma(x + u; \chi) - \Psi_\Gamma(x; \chi) - \delta_{\chi = 1}u)\omega_Y(u) \, du + O_\varepsilon(x^{7/8 + \varepsilon}Y^{-1/4} + \delta_{k = 0}x^{1/2 + 7/64}). 
\end{equation}
\end{prop}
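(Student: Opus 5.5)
The identity is equivalent to showing that the smoothed average $\int_0^Y \Psi_\Gamma(x+u;\chi)\omega_Y(u)\,du$ equals $\delta_{\chi=1}\int_0^Y (x+u)\omega_Y(u)\,du$ up to the stated error. Since $\int\omega_Y=1$, the left side of \eqref{eq:smoothingtobeproved} minus the right side is exactly
\begin{equation*}
\int_0^Y \bigl(\Psi_\Gamma(x+u;\chi) - \delta_{\chi=1}(x+u)\bigr)\omega_Y(u)\,du .
\end{equation*}
So the plan is to insert the explicit formula \eqref{eq:iwaniecslemma} at the point $x+u$ (uniformly for $0\le u\le Y\le x$) and integrate against $\omega_Y$. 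This leaves
\begin{equation*}
\sum_{0\le t_j\le T}\int_0^Y \Bigl(\frac{(x+u)^{1/2+it_j}}{1/2+it_j}+\frac{(x+u)^{1/2-it_j}}{1/2-it_j}\Bigr)\omega_Y(u)\,du + O\Bigl(\frac{x(\log x)^2}{T}+\delta_{k=0}x^{1/2+7/64}\Bigr),
\end{equation*}
with $T$ a free parameter in $[1,x^{1/2}(\log x)^{-2}]$.

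The smoothing now gives extra decay in $t_j$. Repeated integration by parts in $u$, using $\omega_Y^{(m)}\ll_m Y^{-1-m}$ and the fact that $(x+u)^{1/2+it}$ is integrated to $(x+u)^{3/2+it}/(3/2+it)$, shows that each term is
\begin{equation*}
\ll x^{1/2}\,t_j^{-1}\min\bigl(1,(x/(Yt_j))^{m}\bigr).
\end{equation*}
Hence the terms with $t_j\ge T_0:=x^{1+\varepsilon}/Y$ are negligible, because the Weyl law gives only $\ll T^2$ eigenvalues up to $T$. I therefore take $T$ at least $T_0$; this is allowed since $Y\ge x^{1/2+\varepsilon}$ forces $T_0\le x^{1/2}$. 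Then the error $x(\log x)^2/T$ is about $Y x^{-\varepsilon'}$, which is acceptable in the range considered (or one simply takes $T=x^{1/2}(\log x)^{-2}$, so this term becomes $x^{1/2+\varepsilon}$ and is absorbed).

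For $t_j\le T_0$ I would write the smoothed oscillatory factor as
\begin{equation*}
x^{1/2+it_j}\,t_j^{-1}\,g(t_j),
\end{equation*}
where $g(t)=\int (1+u/x)^{1/2+it}\omega_Y(u)\,du\cdot(\text{smooth factor})$. Here $g$ is smooth and satisfies derivative bounds $g^{(m)}(t)\ll (Y/x)^m$ on dyadic ranges. The sum over $t_j$ is then handled by partial summation against $\S(t,x)$, using the Luo--Sarnak type bound \eqref{eq:luosarnakbound}; the dependence of the phase on $u$ is absorbed into $g$ rather than into $x$. On a dyadic block $t_j\sim T_1$ this gives
\begin{equation*}
\ll x^{1/2}T_1^{-1}\,T_1^{5/4}x^{1/8+\varepsilon}\bigl(1+T_1Y/x\bigr),
\end{equation*}
the last factor coming from the variation of $g$. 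Summing over $T_1\le T_0$, the top block dominates and contributes
\begin{equation*}
\ll x^{5/8+\varepsilon}T_0^{1/4}=x^{7/8+\varepsilon}Y^{-1/4},
\end{equation*}
which is exactly the claimed error. The terms with $-it_j$ are handled identically by conjugation. Small $t_j$, below $100$, contribute $O(x^{1/2})$ trivially.

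\textbf{Main obstacle.} The delicate step is the partial summation. The weights depend on both $u$ and $t_j$, and \eqref{eq:luosarnakbound} is stated only for the sharp sum $\S(T,x)$ at a fixed $x$. My first attempt is to write $(x+u)^{it}=x^{it}(1+u/x)^{it}$ and then interchange the $u$-integral with the $t_j$-sum. Abel summation in $t$ then gives, for each fixed $u$, a bound $\ll T_1^{5/4}x^{1/8+\varepsilon}(1+T_1u/x)$, and integrating over $u$ with the weight $\omega_Y$ yields the same result. Otherwise one applies \eqref{eq:luosarnakbound} directly at the point $x+u$, which is legitimate because the bound is uniform in $x$.
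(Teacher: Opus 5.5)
Your argument is correct and is essentially the paper's. The paper simply defers to Soundararajan--Young's proof of their Equation (13), which follows exactly your scheme: apply the explicit formula \eqref{eq:iwaniecslemma} at $x+u$, integrate by parts in $u$ to discard $t_j \gg x^{1+\varepsilon}/Y$, and use partial summation against \eqref{eq:luosarnakbound} at the point $x+u$ for the remaining $t_j$, which gives $x^{5/8+\varepsilon}(x/Y)^{1/4}$.

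One correction: the option $T=T_0$ does not work in general. Since $x(\log x)^2/T_0 \approx Yx^{-\varepsilon}$, this term exceeds $x^{7/8+\varepsilon}Y^{-1/4}$ once $Y\geq x^{7/10+\delta}$ for a fixed $\delta>0$, and that is precisely the range of $Y$ used later in the paper. You must therefore take $T=x^{1/2}(\log x)^{-2}$, as in your parenthetical, which makes this error $O(x^{1/2+\varepsilon})$.
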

\begin{proof}
Using \eqref{eq:luosarnakbound} instead of \cite[Equation (58)]{luosarnak}, and using \eqref{eq:iwaniecslemma} instead of \cite[Lemma 1]{Iwaniec1984}, the proof of Proposition \ref{prop:smoothingsum} is exactly the same as the proof of Equation (13) from \cite{Soundararajan_2013}.
\end{proof}

The following proposition generalizes the discussion of Section \ref{subsec:mainidea}.

\begin{prop}\label{prop:mylocalinterval}
Let $\theta$ be an admissible exponent. For $1 \leq V \leq x$, $1/2 \leq \sigma \leq 1$ and $x^{1/2 + \varepsilon} \leq u \leq x^{1-\varepsilon}$ we have 
\begin{equation*}
    \Psi_\Gamma(x + u; \chi) - \Psi_\Gamma(x; \chi) = \delta_{\chi = 1}u + O_\varepsilon\left(x^{1/2 + \varepsilon}\left(x^{2(1 + 2\theta)(1-\sigma) + \theta}V^{-1/2} + V^{1/2} + ux^{-1/2}V^{\sigma - 1}\right)\right).
\end{equation*}
\end{prop}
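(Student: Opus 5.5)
We start from the identity \eqref{eq:pgtsumofresidues}, which gives
\begin{equation*}
\Psi_\Gamma(x+u;\chi)-\Psi_\Gamma(x;\chi)=\frac{\vol(\Gamma\backslash\Hyp)\pi}{2}\sum_{X<n\leq X'}(n^2-4)^{1/2}\,\text{Res}_{s=1}Z(s;n),
\end{equation*}
with $X=\sqrt{x}+1/\sqrt{x}$ and $X'=\sqrt{x+u}+1/\sqrt{x+u}$. Since $u\le x^{1-\varepsilon}$, every $n$ in this range lies in $(x^{1/2},2x^{1/2}]$, so Lemma \ref{lem:mostzetasatisfylindelof} applies. The plan is to mimic \cite[Section 2]{bykovskiiprimegeodesicshortintervals} and \cite[Section 5]{Soundararajan_2013}, that is, to establish the analogue of \eqref{eq:localexpressionsoundyoung}.

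\textbf{Step 1: smoothing the residue.} For each $n$, shift the contour in
\begin{equation*}
\frac{1}{2\pi i}\int_{(2)}Z(s;n)V^{s-1}\Gamma(s-1)\,ds
\end{equation*}
to $\text{Re}(s)=\sigma'$ with $\sigma'<1$. The residue at the double pole $s=1$ picks up $\text{Res}_{s=1}Z(s;n)$, together with a term involving the constant term of $Z(s;n)$ at $s=1$ and $\log V$. On $\text{Re}(s)=2$ we expand the Dirichlet series $\ell^sZ(s;n)=\sum_c a(c;n)c^{-s}$. This gives
\begin{equation*}
\text{Res}_{s=1}Z(s;n)=\ell^{-1}\sum_c\frac{a(c;n)}{c}\,e^{-c/V}-I_V(n),
\end{equation*}
up to these residue adjustments, where $I_V(n):=\frac{1}{2\pi i}\int_{(\sigma')}Z(s;n)V^{s-1}\Gamma(s-1)\,ds$.

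\textbf{Step 2: the main term.} Summing the first term over $X<n\le X'$, with weight $(n^2-4)^{1/2}$, requires the distribution of $a(c;n)$ in $n$ modulo $cq$; these are counts of solutions of quadratic congruences. Following \cite[Section 2]{bykovskiiprimegeodesicshortintervals}, one replaces the sum over $n$ in an interval of length $\asymp ux^{-1/2}$ by its average plus the error from incomplete sums. This produces:
\begin{itemize}[label={}]
\item the main term $\delta_{\chi=1}u$ (via the orthogonality of $\chi$ on $\widetilde\Gamma$ encoded in \eqref{eq:expressionforgeneralzagierzeta}, which also yields the tail term $O(uV^{-1/2})$);
\item an error of size $x^{1/2}\sum_{c}c^{-1/2+\varepsilon}e^{-c/V}\ll x^{1/2}V^{1/2+\varepsilon}$, coming from the fact that each complete residue system mod $c$ has error $O(c^{1/2+\varepsilon})$ and each term carries the weight $n\asymp x^{1/2}$.
\end{itemize}
This accounts for the term $x^{1/2+\varepsilon}V^{1/2}$ in the statement; the $uV^{-1/2}$ term is absorbed since $u\le x^{1-\varepsilon}$ and it is dominated by the remaining terms after enlarging $\varepsilon$. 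The required equidistribution input is presumably already established in the companion paper \cite{reche2026chebotarevgeodesictheoremsplit}. I expect Steps 1 and 2 to be the main technical obstacle, since the general level-$q$ Kloosterman-type structure of $a(c;n)$ must be handled; I would try to quote the relevant lemmas of \cite[Section 9]{reche2026chebotarevgeodesictheoremsplit} directly.

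\textbf{Step 3: the integrals $I_V(n)$.} Truncate the contour to $|\text{Im}(s)|\le x^\varepsilon$. The decay of $\Gamma(s-1)$ and the polynomial bound \eqref{eq:subconvexitybykovskii} make the tails negligible. Then split according to whether $n\in\B_\sigma$, the exceptional set of Lemma \ref{lem:mostzetasatisfylindelof}.
\begin{itemize}[label={}]
\item For $n\notin\B_\sigma$, take $\sigma'=\sigma$ and use the Lindelöf-type bound \eqref{eq:lindelofsigma}. This gives $I_V(n)\ll x^\varepsilon V^{\sigma-1}$. Summing over $\ll ux^{-1/2}$ values of $n$, each weighted by $(n^2-4)^{1/2}\ll x^{1/2}$, contributes $x^{1/2+\varepsilon}\cdot ux^{-1/2}V^{\sigma-1}$.
\item For $n\in\B_\sigma$, take $\sigma'=1/2$ and use \eqref{eq:zetaintermsofsignedzetas} with \eqref{eq:subconvexitybykovskii}. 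This gives $I_V(n)\ll x^{\theta+\varepsilon}V^{-1/2}$, since $|n|^{2\theta}\asymp x^\theta$. Multiplying by $|\B_\sigma|\ll x^{2(1+2\theta)(1-\sigma)+\varepsilon}$ and by the weight $x^{1/2}$ yields $x^{1/2+\varepsilon}x^{2(1+2\theta)(1-\sigma)+\theta}V^{-1/2}$.
\end{itemize}

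\textbf{Step 4: residue adjustments.} The extra residue terms at $s=1$ from Step 1 telescope against the main-term computation of Step 2, exactly as in \cite{Soundararajan_2013}. Adding the contributions of Steps 2 and 3 gives the claimed bound.
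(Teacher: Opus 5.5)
Your Step 3 is the same as the paper's: the split by $\B_\sigma$, the line $\text{Re}(s)=1/2$ with \eqref{eq:subconvexitybykovskii}, and the line $\text{Re}(s)=\sigma$ with \eqref{eq:lindelofsigma}. Steps 1, 2 and 4, however, rest on the wrong kernel, and as written they do not isolate $\Psi_\Gamma(x+u;\chi)-\Psi_\Gamma(x;\chi)$.

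\textbf{The gap.} The weight $\Gamma(s-1)$ is what Soundararajan--Young use against $L(s,n^2-4)$, which is entire. Here $Z(s;n)$ already has a simple pole at $s=1$; in \eqref{eq:expressionforgeneralzagierzeta} it comes from $L(s;\psi)/L(2s;\psi)$ with $\psi$ principal. So $Z(s;n)V^{s-1}\Gamma(s-1)$ has a double pole at $s=1$, with residue
\[
(\log V-\gamma)\,\mathrm{Res}_{s=1}Z(s;n)+C_n ,
\]
where $\gamma$ is Euler's constant and $C_n$ is the constant term of the Laurent expansion of $Z(s;n)$ at $s=1$. After weighting and summing over $n$, the residues give
\[
(\log V-\gamma)\bigl(\Psi_\Gamma(x+u;\chi)-\Psi_\Gamma(x;\chi)\bigr)+\frac{\vol(\Gamma\backslash\Hyp)\pi}{2}\sum_{X<n\le X'}(n^2-4)^{1/2}C_n .
\]
Nothing available controls the second term. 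Consistently, your Step 2 main term is misstated. Since $\sum_c a(c;n)c^{-s}$ has a pole at $s=1$, the sum $\sum_c a(c;n)c^{-1}e^{-c\ell/V}$ grows like $\log V$. So for $\chi=1$ your averaged main term contains a term of size $u\log V$, rather than being $\delta_{\chi=1}u$ plus an error. Nothing ``telescopes as in Soundararajan--Young'', because the double pole never arises there.

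\textbf{The repair} is what the paper does: use a weight whose Mellin transform is regular at $s=1$ and equals $1$ there. Take $\omega\in C_c^\infty(0,\infty)$ with $\int\omega=1$, and set
\[
\S_V=\frac{\vol(\Gamma\backslash\Hyp)\pi}{2V}\sum_{c\ge1}\sum_{X<n\le X'}(n^2-4)^{1/2}a(c;n)\,\omega(c\ell/V).
\]
Mellin inversion gives $\frac1V\sum_c a(c;n)\omega(c\ell/V)=\frac{1}{2\pi i}\int_{(2)}Z(s;n)\widehat{\omega}(s)V^{s-1}\,ds$. The pole at $s=1$ is now simple, with residue exactly $\mathrm{Res}_{s=1}Z(s;n)$ because $\widehat{\omega}(1)=1$. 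Shifting the contour yields \eqref{eq:auxeeqaftermellin}.

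The main-term input you were unsure about is then precisely \cite[Equation (146)]{reche2026chebotarevgeodesictheoremsplit}, i.e.\ \eqref{eq:auxevaluationapprox}. The paper quotes it for this specific smoothed sum rather than redoing Bykovskii's equidistribution argument. With $\widehat{\omega}(s)$ in place of $\Gamma(s-1)$, your Step 3 goes through verbatim, using the rapid decay of $\widehat{\omega}$ for the truncation.

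\textbf{A minor point.} The tail term ($uV^{-1}$ in the paper, $uV^{-1/2}$ in your version) is absorbed because $V^{-1/2}\le V^{\sigma-1}$ for $\sigma\ge 1/2$. It is therefore at most $x^{1/2}\cdot ux^{-1/2}V^{\sigma-1}$. The condition $u\le x^{1-\varepsilon}$ plays no role in this absorption.
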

\begin{proof}
Let $\omega \in C_c^\infty(0, \infty)$ satisfy $\int_\R \omega(x) \, dx = 1$. Let $\widehat{\omega}$ denote the Mellin transform of $\omega$. Note that $|\widehat{\omega}(s)| \ll_B (1 + |s|)^{-B}$ for arbitrary $B > 0$ on any fixed vertical strip. Consider 
\begin{equation}\label{eq:approximationtogeometricsideshortintervals}
    \S_V := \frac{\vol(\Gamma\backslash \Hyp)\pi}{2V}\sum_{c\geq 1}\sum_{X < n \leq X'} (n^2 - 4)^{1/2} a(c; n)\omega(c \ell/V)
\end{equation}
with $X = \sqrt{x} + 1/\sqrt{x}$ and $X' = \sqrt{x + u} + 1/\sqrt{x + u}$.
We know from \cite[Equation (146)]{reche2026chebotarevgeodesictheoremsplit} that 
\begin{equation}\label{eq:auxevaluationapprox}
    \S_V = \delta_{\chi = 1}u + O_{q, \varepsilon}(V^{1/2 + \varepsilon}x^{1/2} + uV^{-1}).
\end{equation}
If we execute first the summation over $c$ in \eqref{eq:approximationtogeometricsideshortintervals}, apply Mellin inversion formula, move the contour to the left, pick up the residues at $s = 1$ and use \eqref{eq:pgtsumofresidues} we obtain
\begin{equation}\label{eq:auxeeqaftermellin}
    \S_V = \Psi_\Gamma(x + u; \chi) - \Psi_\Gamma(x; \chi) + \frac{\vol(\Gamma\backslash \Hyp)\pi}{2} \sum_{X < n \leq X'}(n^2 - 4)^{1/2}I_V(n^2 - 4),
\end{equation}
where 
\begin{equation*}
    I_V(n^2 - 4) := \frac{1}{2\pi i}\int_{(\eta)} Z(s; n)\widehat{\omega}(s)V^{s-1}\, ds
\end{equation*}
with $1/2 \leq \eta < 1$. We bound the sum of integrals by applying Lemma \ref{lem:mostzetasatisfylindelof} and distinguishing two cases:
\begin{enumerate}
    \item [i)] If $n \in \B_\sigma$, we let $\eta = 1/2$ and use \eqref{eq:subconvexitybykovskii} to obtain 
    \begin{equation*}
        I_V(n^2 - 4) \ll_\varepsilon x^{\theta + \varepsilon} V^{-1/2}.
    \end{equation*}
    \item [ii)] If $n \in (X, X']\setminus \B_\sigma$ we let $\eta = \sigma + \varepsilon$ and use the bound in \eqref{eq:lindelofsigma} and the rapid decay of $\widehat{\omega}(s)$ to deduce 
    \begin{equation*}
        I_V(n^2 - 4) \ll_\varepsilon x^{\varepsilon} V^{\sigma - 1}.
    \end{equation*}
\end{enumerate}
We know that $|\B_\sigma| \ll x^{2(1+2\theta)(1-\sigma) + \varepsilon}$ and $X' - X \asymp ux^{-1/2}$. Summing over $n$ we get 
\begin{equation}\label{eq:auxeqboundsumintegrals}
    \sum_{X < n \leq X'}(n^2 - 4)^{1/2}I_V(n^2 - 4) \ll_\varepsilon x^{1/2 + \varepsilon}\left( x^{2(1 + 2\theta)(1-\sigma) + \theta} V^{-1/2} + ux^{-1/2}V^{\sigma - 1}\right). 
\end{equation}
The proof of Proposition \ref{prop:mylocalinterval} is concluded by combining \eqref{eq:auxevaluationapprox}, \eqref{eq:auxeeqaftermellin} and \eqref{eq:auxeqboundsumintegrals}.
\end{proof}

Combining Proposition \ref{prop:smoothingsum} and Proposition \ref{prop:mylocalinterval} we obtain 
\begin{equation*}
    \Psi_\Gamma(x; \chi) = \delta_{\chi = 1}x + O_\varepsilon\left(x^{1/2 +\varepsilon}\left(x^{3/8}Y^{-1/4} + x^{2(1 + 2\theta)(1-\sigma) + \theta}V^{-1/2} + V^{1/2} + Yx^{-1/2}V^{\sigma - 1} + \delta_{k = 0}x^{7/64}\right)\right),
\end{equation*}
where $1/2 \leq \sigma \leq 1$, $1 \leq V \leq x$ and $x^{1/2 + \varepsilon} \leq Y \leq x^{1 - \varepsilon}$. It is a routine calculation to check that the error term is optimized with the choice 
\begin{equation*}
\begin{aligned}
    1 - \sigma & = \frac{-5-11\theta + \sqrt{81\theta^2 + 106\theta +33}}{4(1 + 2\theta)},\\
    V & = x^{2(1 + 2\theta)(1-\sigma) + \theta},\\
    Y & = x^{7/10 + 8(1 + 2\theta)(1-\sigma)^2/5 + 4\theta(1-\sigma)/5},
\end{aligned}
\end{equation*}
which leads to \eqref{eq:newexponentcongruencecharacters} and completes the proof of Theorem \ref{thm:maintheorem}.

\newpage

\newgeometry{a4paper, total={7in, 9.4in}}

\appendix
\section*{Appendix}

The first version of this paper was submitted before OpenAI's release, on 6 October 2026, of a manuscript claiming a proof that Dirichlet $L$-functions have no zeros on the region $\text{Re}(s) > 7/8$. A follow-up preprint claims a slightly larger zero-free region \cite{liu2026slightlyimprovedzerofreehalfplanes}. If Dirichlet $L$-functions had no zeros on the region $\text{Re}(s) > \sigma_0$ then, since $\B_{\sigma_0} = \emptyset$, Proposition \ref{prop:mylocalinterval} would hold for $\sigma = \sigma_0$ with the first term in the error omitted. After optimization this would lead to the exponent $\eta'(\sigma_0) = 1/2 + 1/(7 - 2\sigma_0)$ in Theorem \ref{thm:maintheorem}. In particular, for $\sigma_0 = 7/8$ one would have $\eta'(7/8) = 29/42 \approx 0.690476 < 0.69359 \approx \eta(1/6)$, improving on Theorem \ref{thm:maintheorem}. 

\printbibliography

@article {heathbrownquadlargesieve,
    AUTHOR = {Heath-Brown, D.},
     TITLE = {A mean value estimate for real character sums},
  JOURNAL = {Acta Arithmetica},
    VOLUME = {72},
      YEAR = {1995},
    NUMBER = {3},
     PAGES = {235--275},
}

@misc{liu2026slightlyimprovedzerofreehalfplanes,
	title={Slightly improved zero-free half-planes for the quasi-Riemann hypothesis}, 
	author={Liu, B.},
	year={2026},
	eprint={2610.12234},
	archivePrefix={arXiv},
	primaryClass={math.NT},
}

@misc{reche2026chebotarevgeodesictheoremsplit,
      title={Chebotarev geodesic theorem: split case}, 
      author={Alberto Acosta Reche},
      year={2026},
      eprint={2606.25903},
      archivePrefix={arXiv},
      primaryClass={math.NT},
}

@misc{reche2026chebotarevgeodesictheoremnonsplit,
      title={Chebotarev geodesic theorem: non-split case}, 
      author={Alberto Acosta Reche},
      year={2026},
      eprint={2608.04923},
      archivePrefix={arXiv},
      primaryClass={math.NT},
}

@book{montgomerytopics,
    AUTHOR = {Montgomery, H. L.},
     TITLE = {Topics in multiplicative number theory},
    SERIES = {Lecture Notes in Mathematics},
    VOLUME = {Vol. 227},
 PUBLISHER = {Springer-Verlag, Berlin-New York},
      YEAR = {1971},
     PAGES = {ix+178},
}

@article {bykovskiiprimegeodesicshortintervals,
    AUTHOR = {Bykovski\u i, V. A.},
     TITLE = {Density theorems and the mean value of arithmetic functions on
              short intervals},
  JOURNAL = {Rossi\u iskaya Akademiya Nauk. Sankt-Peterburgskoe Otdelenie.
              Matematicheski\u i\ Institut im. V. A. Steklova. Zapiski
              Nauchnykh Seminarov (POMI)},
    VOLUME = {212},
      YEAR = {1994},
     PAGES = {56--70, 196},
}

@article {petrowyoung20,
    AUTHOR = {Petrow, I. and Young, M. P.},
     TITLE = {The {W}eyl bound for {D}irichlet {$L$}-functions of cube-free
              conductor},
  JOURNAL = {Annals of Mathematics},
    VOLUME = {192},
      YEAR = {2020},
    NUMBER = {2},
     PAGES = {437--486},
}

@article {petrowyoung,
    AUTHOR = {Petrow, I. and Young, M. P.},
     TITLE = {The fourth moment of {D}irichlet {$L$}-functions along a coset
              and the {W}eyl bound},
  JOURNAL = {Duke Mathematical Journal},
    VOLUME = {172},
      YEAR = {2023},
    NUMBER = {10},
     PAGES = {1879--1960},
}

@article{luosarnak,
author = {Luo, W. and Sarnak, P.},
title = {{Quantum Ergodicity of Eigenfunctions on $PSL_2(\mathbf{Z})/\mathbf{H}^2$}},
volume = {81},
journal = {Publications Mathématiques de l'IHÉS},
pages = {207 -- 237},
year = {1995}
}

@article{Iwaniec1984,
    AUTHOR = {Iwaniec, H.},
     TITLE = {Prime geodesic theorem},
  JOURNAL = {Journal f\"ur die Reine und Angewandte Mathematik. [Crelle's
              Journal]},
    VOLUME = {349},
      YEAR = {1984},
     PAGES = {136--159},
}

@article{DFIsubconvexityartin,
    AUTHOR = {Duke, W. and Friedlander, J. B. and Iwaniec, H.},
     TITLE = {The subconvexity problem for {A}rtin {$L$}-functions},
  JOURNAL = {Inventiones Mathematicae},
    VOLUME = {149},
      YEAR = {2002},
    NUMBER = {3},
     PAGES = {489--577},
}

@article{conreyiwaniec,
    AUTHOR = {Conrey, J. B. and Iwaniec, H.},
     TITLE = {The cubic moment of central values of automorphic
              {$L$}-functions},
  JOURNAL = {Annals of Mathematics. Second Series},
    VOLUME = {151},
      YEAR = {2000},
    NUMBER = {3},
     PAGES = {1175--1216},
}

@article{kimramakrishnansarnak2003,
AUTHOR = "Kim, H.",
     TITLE = {Functoriality for the exterior square of {${\rm GL}_4$} and
              the symmetric fourth of {${\rm GL}_2$}},
      NOTE = {With appendix 1 by Dinakar Ramakrishnan and appendix 2 by Kim
              and Peter Sarnak},
  JOURNAL = {Journal of the American Mathematical Society},
    VOLUME = {16},
      YEAR = {2003},
    NUMBER = {1},
     PAGES = {139--183},
}

@article{Soundararajan_2013,
    AUTHOR = {Soundararajan, K. and Young, M. P.},
     TITLE = {The prime geodesic theorem},
  JOURNAL = {Journal f\"ur die Reine und Angewandte Mathematik. [Crelle's
              Journal]},
    VOLUME = {676},
      YEAR = {2013},
     PAGES = {105--120},
}

@article {Cai2002,
    AUTHOR = {Cai, Y.},
     TITLE = {Prime geodesic theorem},
JOURNAL = {Journal de Th\'eorie des Nombres de Bordeaux},
    VOLUME = {14},
      YEAR = {2002},
    NUMBER = {1},
     PAGES = {59--72},
}

@book{iwanieckowalski,
    AUTHOR = {Iwaniec, H. and Kowalski, E.},
     TITLE = {Analytic number theory},
    SERIES = {American Mathematical Society Colloquium Publications},
    VOLUME = {53},
 PUBLISHER = {American Mathematical Society, Providence, RI},
      YEAR = {2004},
     PAGES = {xii+615},
}

@book{iwaniec,
    AUTHOR = {Iwaniec, H.},
     TITLE = {Spectral methods of automorphic forms},
    SERIES = {Graduate Studies in Mathematics},
    VOLUME = {53},
   EDITION = {Second},
 PUBLISHER = {American Mathematical Society, Providence, RI; Revista
              Matem\'atica Iberoamericana, Madrid},
      YEAR = {2002},
     PAGES = {xii+220},
}

\footnotesize
\textit{Email address}: \, \texttt{alberto.reche.23@ucl.ac.uk}\par\nopagebreak
\textsc{Department of Mathematics, University College London, 25 Gordon Street, London WC1H 0AY, United Kingdom}

\end{document}